\documentclass[11pt]{article}

\usepackage[a4paper,top=2.35cm,bottom=2.5cm,left=2.45cm,right=2.45cm,
            headheight=22pt,headsep=0.55cm]{geometry}
\usepackage[T1]{fontenc}
\usepackage[utf8]{inputenc}
\usepackage{amsmath,amsthm,mathtools}
\usepackage{newtxtext}
\usepackage{newtxmath}
\usepackage[scaled=0.94]{sourcesanspro}
\usepackage{microtype}

\usepackage{bm}
\usepackage{mathrsfs}

\usepackage{graphicx}
\usepackage{subcaption}
\usepackage{booktabs}
\usepackage{array}
\usepackage{tabularx}
\usepackage{enumitem}
\usepackage{xcolor}
\usepackage{titlesec}
\usepackage{fancyhdr}
\usepackage[most]{tcolorbox}
\usepackage{xurl}
\usepackage{hyperref}
\usepackage[nameinlink,noabbrev]{cleveref}
\usepackage[numbers,sort&compress]{natbib}

\definecolor{PaperNavy}{HTML}{17324D}
\definecolor{PaperTeal}{HTML}{1E6F72}
\definecolor{PaperInk}{HTML}{22272B}
\definecolor{PaperMuted}{HTML}{66727C}
\definecolor{PaperLine}{HTML}{D9E0E4}
\definecolor{PaperWash}{HTML}{F4F7F8}

\color{PaperInk}
\setlist{nosep,leftmargin=2.0em,topsep=0.45em}
\setlist[enumerate,1]{label=\textcolor{PaperTeal}{\arabic*.}}
\setlist[itemize,1]{label=\textcolor{PaperTeal}{\raisebox{0.2ex}{\small$\bullet$}}}

\titleformat{\section}
  {\Large\sffamily\bfseries\color{PaperNavy}}
  {\textcolor{PaperTeal}{\thesection}}{0.72em}{}
  [\vspace{0.25em}\color{PaperLine}\titlerule]
\titleformat{\subsection}
  {\large\sffamily\bfseries\color{PaperNavy}}
  {\textcolor{PaperTeal}{\thesubsection}}{0.68em}{}
\titleformat{\subsubsection}
  {\normalsize\sffamily\bfseries\color{PaperNavy}}
  {\textcolor{PaperTeal}{\thesubsubsection}}{0.65em}{}
\titlespacing*{\section}{0pt}{2.0em}{0.8em}
\titlespacing*{\subsection}{0pt}{1.5em}{0.45em}
\titlespacing*{\subsubsection}{0pt}{1.1em}{0.35em}

\hypersetup{
  colorlinks=true,
  linkcolor=PaperTeal,
  citecolor=PaperTeal,
  urlcolor=PaperTeal,
  pdfborder={0 0 0},
  pdftitle={persistence pairing Graphs: Tracking Basis Transitions in Persistent Homology},
  pdfauthor={John Rick Manzanares}
}
\renewcommand{\headrulewidth}{0.35pt}
\renewcommand{\headrule}{\hbox to\headwidth{\color{PaperLine}\leaders\hrule height \headrulewidth\hfill}}

\newtheoremstyle{paperdefinition}
  {0.75em}{0.75em}{\normalfont}{}%
  {\sffamily\bfseries\color{PaperNavy}}{.}{0.55em}{}
\newtheoremstyle{paperplain}
  {0.75em}{0.75em}{\itshape}{}%
  {\sffamily\bfseries\color{PaperNavy}}{.}{0.55em}{}
\newtheoremstyle{paperremark}
  {0.65em}{0.65em}{\normalfont}{}%
  {\sffamily\bfseries\color{PaperNavy}}{.}{0.55em}{}

\theoremstyle{paperdefinition}
\newtheorem{definition}{Definition}[section]
\theoremstyle{paperplain}
\newtheorem{theorem}[definition]{Theorem}
\newtheorem{proposition}[definition]{Proposition}
\newtheorem{lemma}[definition]{Lemma}
\newtheorem{corollary}[definition]{Corollary}
\theoremstyle{paperremark}

\tcolorboxenvironment{definition}{
  enhanced,breakable,colback=PaperWash,colframe=PaperLine,
  boxrule=0.45pt,arc=1.2pt,left=9pt,right=9pt,top=6pt,bottom=6pt,
  borderline west={2pt}{0pt}{PaperTeal}
}
\tcolorboxenvironment{theorem}{
  enhanced,breakable,colback=white,colframe=PaperLine,
  boxrule=0.45pt,arc=1.2pt,left=9pt,right=9pt,top=6pt,bottom=6pt,
  borderline west={2pt}{0pt}{PaperNavy}
}
\tcolorboxenvironment{proposition}{
  enhanced,breakable,colback=white,colframe=PaperLine,
  boxrule=0.45pt,arc=1.2pt,left=9pt,right=9pt,top=6pt,bottom=6pt,
  borderline west={2pt}{0pt}{PaperNavy}
}
\tcolorboxenvironment{lemma}{
  enhanced,breakable,colback=white,colframe=PaperLine,
  boxrule=0.45pt,arc=1.2pt,left=9pt,right=9pt,top=6pt,bottom=6pt,
  borderline west={2pt}{0pt}{PaperNavy}
}
\tcolorboxenvironment{corollary}{
  enhanced,breakable,colback=white,colframe=PaperLine,
  boxrule=0.45pt,arc=1.2pt,left=9pt,right=9pt,top=6pt,bottom=6pt,
  borderline west={2pt}{0pt}{PaperNavy}
}

\newtcolorbox{intuition}[1][]{
  enhanced,breakable,colback=PaperWash,colframe=PaperLine,
  boxrule=0.4pt,arc=1.2pt,left=10pt,right=10pt,top=7pt,bottom=7pt,
  borderline west={2pt}{0pt}{PaperTeal},
  fonttitle=\sffamily\bfseries\color{PaperNavy},
  title={Intuition\if\relax\detokenize{#1}\relax\else: #1\fi}
}
\newtcolorbox{takeaway}[1][]{
  enhanced,breakable,colback=white,colframe=PaperLine,
  boxrule=0.4pt,arc=1.2pt,left=10pt,right=10pt,top=7pt,bottom=7pt,
  borderline west={2pt}{0pt}{PaperNavy},
  fonttitle=\sffamily\bfseries\color{PaperNavy},
  title={Key point\if\relax\detokenize{#1}\relax\else: #1\fi}
}

\newcommand{\R}{\mathbb{R}}

\newcommand{\safeincludegraphics}[2][]{%
  \IfFileExists{#2}{\includegraphics[#1]{#2}}{%
    \begingroup
    \setlength{\fboxsep}{0pt}%
    \color{PaperLine}%
    \fbox{\parbox[c][4.2cm][c]{0.94\linewidth}{%
      \centering\sffamily\small\color{PaperMuted}
      Figure placeholder\\[0.35em]
      \texttt{\detokenize{#2}}}}
    \endgroup}}

\makeatletter
\title{Persistence Pairing Graphs: Tracking Changes of Selected Homology Bases}

\author{John Rick Manzanares\textsuperscript{1,2,*}}
\date{\today}

\newcommand{\papertype}{Preprint}
\newcommand{\affiliations}{%
\begin{tabularx}{\linewidth}{@{}>{\raggedleft\arraybackslash\sffamily\bfseries\color{PaperTeal}}p{1.5em} X@{}}
1 & Dioscuri Centre in Topological Data Analysis, Institute of Mathematics of the Polish Academy of Sciences, 00-656 Warsaw, Poland\\[0.2em]
2 & International Environmental Doctoral School, University of Silesia in Katowice, 41-200 Sosnowiec, Poland
\end{tabularx}}
\newcommand{\correspondence}{%
John Rick Manzanares, \href{mailto:jdolormanzanares@impan.pl}{jdolormanzanares@impan.pl}}
\newcommand{\funding}{%
This project has received funding from the European Union's Horizon Europe research and innovation programme under the Marie Skłodowska-Curie Actions (MSCA), grant agreement No.~101120290 (GAP).}
\newcommand{\competinginterests}{The authors declare no competing interests.}
\newcommand{\keywords}{cellular homology; persistent cohomology; filtered chain complexes; homological algebra; Gromov--Wasserstein discrepancy; optimal transport}

\renewcommand{\maketitle}{%
  \thispagestyle{empty}
  \noindent
  \begin{minipage}[t]{0.69\textwidth}
    {\sffamily\footnotesize\bfseries\color{PaperTeal}\MakeUppercase{\papertype}}\\[0.75em]
    {\fontsize{23}{27}\selectfont\sffamily\bfseries\color{PaperNavy}\@title\par}
    \vspace{0.85em}
    {\large\sffamily\color{PaperInk}\@author\par}
  \end{minipage}%
  \hfill
  \begin{minipage}[t]{0.24\textwidth}
    \raggedleft
    {\sffamily\footnotesize\color{PaperMuted}Version date\\[0.2em]
    \color{PaperInk}\@date}
  \end{minipage}

  \vspace{1.0em}
  \noindent{\color{PaperTeal}\rule{\textwidth}{1.15pt}}
  \vspace{0.95em}

  {\small\noindent\affiliations\par}
  \vspace{0.75em}

  \noindent\begin{tcolorbox}[
    enhanced,colback=PaperWash,colframe=PaperLine,boxrule=0.4pt,
    arc=1.2pt,left=10pt,right=10pt,top=8pt,bottom=8pt]
  \noindent\begin{tabularx}{\linewidth}{@{}>{\sffamily\bfseries\color{PaperNavy}}p{3.3cm} X@{}}
  Correspondence & \correspondence\\[0.32em]
  Funding & \funding\\[0.32em]
  Competing interests & \competinginterests\\[0.32em]
  Keywords & \keywords
  \end{tabularx}
  \end{tcolorbox}
  \vspace{0.35em}
}
\makeatother

\begin{document}

\maketitle

\begin{abstract}
\noindent
Persistent homology summarizes a filtration by recording when homological features appear and disappear, but the resulting barcode does not determine a preferred homology basis or preferred cycle representatives. We study the additional information carried by a basis selected through matrix reduction and introduce the \emph{persistence pairing graph}, a reduction-dependent directed graph that records how selected homology classes are re-expressed when their associated persistence intervals end. After fixing a filtered finite CW complex, a coefficient field, a filtration-compatible cell order, and a reduced factorization of the boundary matrix, the reduction determines selected birth cycles. When an associated persistence interval ends, the image of its selected birth class has unique coordinates in the basis formed by selected classes whose intervals survive beyond the same filtration value, and the nonzero coordinates define the graph edges. These coordinates can also be recovered through the Kronecker pairing with a dual cohomology basis. We prove that the active selected birth cycles form a homology basis at every cellwise filtration stage, characterize the kernel and image of each homological transition, and show that the graph is directed and acyclic, with every edge pointing toward an interval containing the source interval. In dimension zero, under the standard reduction convention, the graph agrees with the elder rule merge tree. We also prove that its edges can change while the persistence pairs and barcode remain fixed. Finally, we define a bounded fused graph discrepancy and separate the contribution controlled by ordinary persistence stability from the reduction-dependent graph structure. The graph is therefore a descriptor relative to a fixed reduction convention, rather than an invariant of the persistence module alone.
\end{abstract}

\section{Introduction}

Persistent homology studies how homology changes along a filtration. Its usual summaries are barcodes and persistence diagrams. A barcode is a multiset of intervals $[b,d)$, where $b$ is the parameter at which an interval summand appears and $d$ is the parameter at which it disappears. These summaries are stable, computable, and widely used in topological data analysis \cite{edelsbrunner2002topological,zomorodian2005computing, ghrist2008barcodes,edelsbrunnerharer2010computational}.

A barcode answers an important question: \emph{when} does a homological feature live? It does not, however, choose a cellular cycle for each interval, and it does not choose a basis of the homology vector space at each filtration value. Matrix reduction does make such choices \cite{edelsbrunner2002topological, zomorodian2005computing, hang2021umatch}. After fixing a total order of the cells and a deterministic reduction rule, the reduction produces concrete cycles that form a basis of homology at each cellwise stage. This raises a second question: when one of the persistence labels ends, \emph{how is the selected basis rewritten in terms of the labels that remain active?}

The distinction between a persistence interval and a selected basis element is essential. Suppose a positive $q$-cell $\sigma_i$ is paired with a negative $(q+1)$-cell $\sigma_j$. The pair $(i,j)$ determines a persistence interval. The reduced death column $R_j$ represents a cycle that becomes a boundary when $\sigma_j$ enters. By contrast, the transformation column $V_i$ is a selected birth cycle. Its homology class need not become zero when the interval label $i$ ends. It may instead map to a linear combination of selected classes whose labels survive longer. There is no contradiction as the interval generator dies at its endpoint, while a particular reduction-selected cycle carrying the same birth label may continue as a different linear combination.

Dimension zero provides the simplest picture. When two connected components merge, the component whose label is not retained by the elder rule does not vanish as a subset of the later space. It becomes part of the component whose label survives. A merge tree records this hierarchy of component mergers \cite{carr2003computing,morozov2013interleaving, edelsbrunnerharer2010computational}. In higher homological dimensions, there is generally no literal merger of embedded loops or voids but, after inclusion across a filtration value, what are the coordinates of a selected dying label in the basis of selected surviving labels?

This paper answers that question by introducing the \emph{persistence pairing graph}. At a filtration value $t$, let $i_t:K_{t^-}\hookrightarrow K_t$ be the inclusion. The selected classes whose persistence intervals cross $t$ form a basis of $\operatorname{im}(i_t)_*$. Therefore, the image of every selected class whose label dies at $t$ has a unique expansion in that surviving basis. We use the nonzero coefficients of these expansions as directed graph edges. Cohomology gives an equivalent coordinate formula where classes dual to the surviving basis recover the same coefficients through the Kronecker pairing.

The construction is related to merge trees but is not a higher-dimensional merge tree in a geometric sense. It is also not determined by the persistence module alone. The barcode is fixed by the filtered complex and the coefficient field, whereas the graph edges can depend on the cell order used to resolve ties and on the reduced factorization used to select cycle representatives. We therefore treat a persistence pairing graph as a graph summary attached to the full reduction data, not as a replacement for the persistence diagram. This distinction is important both for interpretation and for comparison of graphs.

The main contributions of this study are as follows:
\begin{enumerate}
\item We prove that the selected birth cycles indexed by active positive cells form a basis of homology at every cellwise stage.

\item We define transition coefficients, at each filtration value, by expanding the images of dying selected labels in the survivor basis. We identify the corresponding kernel vectors and recover the same coefficients through the Kronecker pairing with a dual cohomology basis.

\item We define a persistence pairing graph from the support of the transition coefficients, prove that it is a directed acyclic graph, and show that in dimension zero, under the standard reduction convention and a common tie-breaking rule, it agrees with a merge tree.

\item We define a bounded fused Gromov--Wasserstein-type discrepancy that combines interval attributes with directed edge structure \cite{vayer2020fused}. For two filtrations on the same finite complex, we prove a matched-subgraph comparison bound in which the interval term is controlled by ordinary persistence stability \cite{cohensteiner2007stability}, while the edge term remains an explicit reduction-dependent disagreement term.
\end{enumerate}

The paper is organized as follows. Section~\ref{sec:preliminaries} develops the algebraic background and the persistence pairing theorem. Section~\ref{sec:persistence pairing-graphs} defines transition coordinates, a persistence pairing graph, its relation to merge trees, a direct computation procedure, and the fused graph discrepancy. The final section summarizes the interpretation, limitations, and directions for further work.

\section{Preliminaries}
\label{sec:preliminaries}

The basic definitions of CW complexes, cellular chains, and cellular homology
used here are standard; see, for example, \cite{hatcher2002algebraic}.

\subsection{Cellular Homology}
\label{subsec:cellular-homology}

Throughout the paper, $K$ is a finite CW complex and $\Bbbk$ is a fixed field. For each $q\geq 0$, let $\mathcal K_q$ denote the set of $q$-dimensional (open) cells of $K$, and let
\[
\mathcal K=\bigcup_{q\geq 0}\mathcal K_q
\]
be the set of all cells. An orientation is fixed for every cell. When $\operatorname{char}(\Bbbk)=2$, the orientation choices do not affect the boundary coefficients.

A CW complex is built inductively by attaching cells. Starting with the $0$-skeleton $K^0$, an open $n$-cell is attached by a continuous map from the boundary sphere $S^{n-1}=\partial D^n$ of a closed $n$-disk into the $(n-1)$-skeleton. The union of the cells of dimension at most $n$ is the $n$-skeleton. We use cellular chains because a finite CW complex gives finite-dimensional chain groups and therefore finite matrices.

We use the standard filtered-complex setting of persistent homology
\cite{edelsbrunner2002topological,zomorodian2005computing,
edelsbrunnerharer2010computational}.

\begin{definition}
\label{def:filtration-function}
A \emph{filtration function} is a map $f:\mathcal K\longrightarrow\mathbb R$ such that
\begin{equation}
\label{eq:face-monotone}
\tau\subseteq\overline{\sigma} \quad\Longrightarrow\quad f(\tau)\leq f(\sigma)
\end{equation}
for all cells $\sigma,\tau\in\mathcal K$.
\end{definition}

Condition~\eqref{eq:face-monotone} says that every face of a cell enters the filtration no later than the cell itself.

For each threshold $t\in\mathbb R$, the \emph{sublevel subcomplex} is given by
\[
K_t = \bigcup_{\{\sigma\in\mathcal K\,\mid\, f(\sigma)\leq t\}}\sigma.
\]

If $a\leq b$, then $K_a\subseteq K_b$. Since $K$ has finitely many cells, the image $f(\mathcal K) \subseteq \mathbb{R}$ is finite. Write its distinct values in increasing order as
\[
t_0<t_1<\cdots<t_m.
\]
We call these the \emph{filtration values}. The filtration may then be written as
\[
\varnothing\subseteq K_{t_0}\subseteq K_{t_1}\subseteq\cdots\subseteq K_{t_m}=K.
\]
For a filtration value $t$, we define
\[
K_{t^-} := \bigcup_{s<t}K_s = \bigcup_{\{\sigma\in\mathcal K\,\mid\,f(\sigma)<t\}}\sigma.
\]
Hence, for $i \geq 1$, $K_{t_i^-}=K_{t_{i-1}}$ while $K_{t_0^-}=\varnothing$.

For each $q\geq0$, the \emph{cellular $q$-chain group} is the $\Bbbk$-vector space
\[
C_q(K;\Bbbk) := \bigoplus_{\sigma\in\mathcal K_q}\Bbbk\,\sigma
\]
freely generated by the oriented $q$-cells of $K$. Thus, every $q$-chain has a unique expression
\[
c=\sum_{\sigma\in\mathcal K_q}a_\sigma\sigma
\]
given $a_\sigma\in\Bbbk$.

For $q \geq 1$ and $\sigma \in \mathcal K_q$, the \emph{cellular boundary} of $\sigma$ is
\[
\partial_q\sigma = \sum_{\tau\in\mathcal K_{q-1}}[\sigma:\tau]\,\tau,
\]
where $[\sigma:\tau]\in\Bbbk$ is the cellular incidence coefficient of $\sigma$ with respect to $\tau$. It records, with sign determined by the chosen orientations, how the attaching map of $\sigma$ passes over the cell $\tau$. The \emph{boundary operator}
\[
\partial_q:C_q(K;\Bbbk)\longrightarrow C_{q-1}(K;\Bbbk).
\]
is obtained by extending this assignment linearly. We set $\partial_0 = 0$. The cellular boundary operators satisfy
\begin{equation}
\label{eq:boundary-square-zero}
\partial_{q-1}\partial_q=0.
\end{equation}

The \emph{spaces of $q$-cycles} and \emph{$q$-boundaries} are
\[
Z_q(K;\Bbbk):=\ker\partial_q
\qquad\text{and}\qquad
B_q(K;\Bbbk):=\operatorname{im}\partial_{q+1}.
\]
Equation~\eqref{eq:boundary-square-zero} implies $B_q(K;\Bbbk)\subseteq Z_q(K;\Bbbk)$. The \emph{$q$-th homology group} is
\[
H_q(K;\Bbbk) := Z_q(K;\Bbbk)/B_q(K;\Bbbk).
\]
Two cycles $z$ and $z'$ represent the same homology class exactly when $z-z'$ is a boundary. Cellular homology is naturally isomorphic to singular homology of the underlying topological space of K \cite{hatcher2002algebraic}. Thus the cellular chain complex provides a finite, cell-based computation of the same homology groups.

\subsection{Cellular Cohomology}
\label{subsec:cohomology}

The \emph{cellular $q$-cochain group} is the linear dual
\[
C^q(K;\Bbbk) := \operatorname{Hom}_{\Bbbk}\bigl(C_q(K;\Bbbk),\Bbbk\bigr).
\]
of the cellular chain group. Thus, a $q$-cochain is a $\Bbbk$-linear functional on $q$-chains and is determined by its values on the $q$-cells. The \emph{coboundary operator} $\delta^q:C^q(K;\Bbbk)\longrightarrow C^{q+1}(K;\Bbbk)$ is the dual of $\partial_{q+1}$ defined by
\[
\delta^q\varphi = \varphi\circ\partial_{q+1}.
\]
Thus, for $c\in C_{q+1}(K;\Bbbk)$,
\[
(\delta^q\varphi)(c)=\varphi(\partial_{q+1}c).
\]
Since $\partial_{q+1}\partial_{q+2} = 0$, the coboundary operators satisfy
\[
\delta^{q+1}\delta^q=0.
\]
The \emph{spaces of cocycles} and \emph{coboundaries} are $Z^q(K;\Bbbk):=\ker\delta^q$ and, for $q \geq 1$, $B^q(K;\Bbbk):=\operatorname{im}\delta^{q-1}$, with $B^0(K;\Bbbk) := 0$. Since $\delta^q\delta^{q-1} = 0$, we have $B^q(K;\Bbbk) \subseteq Z^q(K;\Bbbk)$. The \emph{$q$-th cellular cohomology group} is
\[
H^q(K;\Bbbk) := Z^q(K;\Bbbk)/B^q(K;\Bbbk).
\]
Two cocyles represent the same cohomology class exactly when their difference is a coboundary.

There is a natural evaluation pairing $C^q(K;\Bbbk)\times C_q(K;\Bbbk)\longrightarrow\Bbbk$ defined by 
\[
(\varphi,c)\longmapsto\varphi(c).
\]
To show that evaluation induces a well-defined pairing on homology and cohomology, we verify that it is independent of the chosen representatives. Let $\varphi \in Z^q(K;\Bbbk)$ and $c \in Z_q(K;\Bbbk)$. If $c$ is replaced by the homologous cycle $c+\partial_{q+1}b$, where $b \in C_{q+1}(K;\Bbbk)$, then
\[
\varphi(c+\partial_{q+1}b) = \varphi(c)+(\delta^q\varphi)(b) = \varphi(c).
\]
Similarly, if $\varphi$ is replaced by the cohomologous cocycle $\varphi+\delta^{q-1}\eta$, where $\eta \in C^{q-1}(K;\Bbbk)$, then
\[
(\varphi+\delta^{q-1}\eta)(c) = \varphi(c)+\eta(\partial_qc) = \varphi(c).
\]
Hence, evaluation depends only on the classes $[\varphi]$ and $[c]$.

\begin{definition}
\label{def:kronecker-pairing}
The \emph{Kronecker pairing} is the map
\[
\langle\cdot,\cdot\rangle_K:
H^q(K;\Bbbk)\times H_q(K;\Bbbk)
\longrightarrow\Bbbk
\]
defined by $\langle[\varphi],[c]\rangle_K:=\varphi(c)$.
\end{definition}

Because $\Bbbk$ is a field, the universal coefficient theorem gives a natural
isomorphism \cite{hatcher2002algebraic}
\[
H^q(K;\Bbbk) \cong \operatorname{Hom}_{\Bbbk}\bigl(H_q(K;\Bbbk),\Bbbk\bigr).
\]
Thus, the Kronecker pairing is nondegenerate in both arguments. This means that every nonzero homology class pairs nontrivially with some cohomology class, and every nonzero cohomology class pairs nontrivially with some homology class. 

Since $K$ is finite, $H_q(K;\Bbbk)$ is finite-dimensional. Hence, every basis $([z_1],\ldots,[z_r])$ of $H_q(K;\Bbbk)$ determines a unique dual basis $([\alpha_1],\ldots,[\alpha_r])$ of $H^q(K;\Bbbk)$ satisfying
\[
\langle[\alpha_i],[z_j]\rangle_K=\delta_{ij},
\]
where
\[
\delta_{ij} =
\begin{cases}
1,& i=j,\\
0,& i\neq j
\end{cases}
\]
is the \emph{Kronecker delta}.

If $L\subseteq K$ is a subcomplex and $i:L\hookrightarrow K$ is the inclusion, then $i$ induces
\[
i_*:H_q(L;\Bbbk)\longrightarrow H_q(K;\Bbbk)
\]
and, in the opposite direction,
\[
i^*:H^q(K;\Bbbk)\longrightarrow H^q(L;\Bbbk).
\]
The following identity is a property of the Kronecker pairing
\cite{hatcher2002algebraic}. We include the short proof because the identity
will be used repeatedly.

\begin{lemma}
\label{lem:kronecker-naturality}
For every $\alpha\in H^q(K;\Bbbk)$ and $x\in H_q(L;\Bbbk)$,
\[
\langle i^*\alpha,x\rangle_L
=
\langle\alpha,i_*x\rangle_K.
\]
\end{lemma}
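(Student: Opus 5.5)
The identity is a direct unwinding of definitions at the chain level, using the fact that the maps $i_*$ and $i^*$ are induced by the chain map $i_\#:C_q(L;\Bbbk)\to C_q(K;\Bbbk)$ and its dual $i^\#:C^q(K;\Bbbk)\to C^q(L;\Bbbk)$. Since $L$ is a subcomplex, $i_\#$ sends each oriented cell of $L$ to the same oriented cell regarded in $K$. The boundary of a cell of $L$ computed in $L$ agrees with its boundary computed in $K$, because the attaching map lands in $L$, so $\partial^K i_\#=i_\#\partial^L$. Dually, $i^\#\varphi=\varphi\circ i_\#$ is restriction of a cochain to chains of $L$. Transposing the chain-map identity gives $\delta_L i^\#=i^\#\delta_K$, so $i^\#$ is a cochain map. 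Hence both maps descend to (co)homology, and by definition $i_*[c]=[i_\#c]$ and $i^*[\varphi]=[i^\#\varphi]$.

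Now fix representatives: write $\alpha=[\varphi]$ with $\varphi\in Z^q(K;\Bbbk)$ and $x=[c]$ with $c\in Z_q(L;\Bbbk)$. Then $i^\#\varphi$ is a cocycle on $L$, $i_\#c$ is a cycle on $K$, and we compute
\[
\langle i^*\alpha,x\rangle_L=(i^\#\varphi)(c)=(\varphi\circ i_\#)(c)=\varphi(i_\#c)=\langle\alpha,i_*x\rangle_K.
\]
The first and last equalities are Definition~\ref{def:kronecker-pairing}, applied respectively in $L$ and in $K$. They are legitimate because the pairing was shown above to be independent of the chosen representatives, so no further choices need to be checked.

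The only step needing any care is the chain-map compatibility $\partial^K i_\#=i_\#\partial^L$, that is, that cellular incidence coefficients $[\sigma:\tau]$ for cells of $L$ are the same whether computed in $L$ or in $K$. I would justify this by noting that the cellular boundary of $\sigma$ depends only on its attaching map into the $(q-1)$-skeleton and the $(q-1)$-cells it meets. For $\sigma\in L$ these all lie in $L$, so the coefficients coincide. Everything else is formal.
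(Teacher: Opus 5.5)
Your proof is correct and matches the paper's argument: choose a cocycle $\varphi$ representing $\alpha$ and a cycle $c$ representing $x$, use $i^\#\varphi=\varphi\circ i_\#$, and unwind the Kronecker pairing on both sides. The only difference is that you also check that $i_\#$ is a chain map and $i^\#$ a cochain map, which the paper takes as standard.
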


\begin{proof}
Choose a cocycle $\varphi\in C^q(K;\Bbbk)$ representing $\alpha$ and a cycle $z\in C_q(L;\Bbbk)$ representing $x$. Let
\[
i_\#:C_q(L;\Bbbk)\longrightarrow C_q(K;\Bbbk)
\]
be the cellular chain map induced by inclusion. The corresponding cochain map is 
\[
i^\#:C^q(K;\Bbbk)\longrightarrow C^q(L;\Bbbk),
\]
where $i^\#\varphi=\varphi\circ i_\#$. Hence, $i^*[\varphi] = [i^\#\varphi]$ and $i_*[z] = [i_\# z]$. Therefore,
\[
\begin{aligned}
\langle i^*\alpha,x\rangle_L
&=\langle[i^\#\varphi],[z]\rangle_L\\
&=(i^\#\varphi)(z)\\
&=\varphi(i_\#z)\\
&=\langle[\varphi],[i_\#z]\rangle_K\\
&=\langle\alpha,i_*x\rangle_K.
\end{aligned}
\]
\end{proof}

\subsection{Persistence Module} 
\label{subsec:persistence-modules}

For $a\leq b$, the inclusion $i_{a,b}:K_a\hookrightarrow K_b$ induces a linear map
\[
(i_{a,b})_*: H_q(K_a;\Bbbk) \longrightarrow H_q(K_b;\Bbbk).
\]
These maps satisfy
\[
(i_{a,a})_*=\operatorname{id}_{H_q(K_a;\Bbbk)}
\]
and, for $a\leq b\leq c$,
\[
(i_{b,c})_*\circ(i_{a,b})_*=(i_{a,c})_*
\]

Let $\mathbb V_t^{(q)} := H_q(K_t;\Bbbk)$ and $v_{a,b}^{(q)} := (i_{a,b})_*$. The collection of vector spaces $\mathbb V_t^{(q)}$ together with the linear maps
\[
v_{a,b}^{(q)}: \mathbb V_a^{(q)} \longrightarrow \mathbb V_b^{(q)} \qquad(a\leq b)
\]
is the \emph{$q$-th persistence module} associated with the filtration. The structure maps satisfy $v_{a,a}^{(q)} = \operatorname{id}_{\mathbb V_a^{(q)}}$ and, for $a\leq b\leq c$,
\[
v_{b,c}^{(q)} \circ v_{a,b}^{(q)} = v_{a,c}^{(q)}.
\]

For $b<d\leq\infty$, the \emph{interval persistence module} $I[b,d)$ is given by
\[
I[b,d)_t =
\begin{cases}
\Bbbk,&b\leq t<d,\\
0,&\text{otherwise}.
\end{cases}
\]
For $a \leq c$, the map $I[b, d)_a \to I[b, d)_c$ is the identity on $\Bbbk$ when $b \leq a \leq c < d$, and the zero map otherwise. 

Since $K$ is finite, the persistence module $\mathbb V^{(q)}$ is pointwise finite-dimensional and changes at only finitely many filtration values. The interval decomposition theorem therefore yields an isomorphism
\begin{equation}
\label{eq:interval-decomposition}
\mathbb V^{(q)} \cong \bigoplus_{u\in\mathcal V^{(q)}}I[b_u,d_u)
\end{equation}
for a finite index set $\mathcal V^{(q)}$ \cite{zomorodian2005computing,crawleyboevey2015decomposition}. The interval $[b_u, d_u)$ records the lifetime of the corresponding persistence class. The expression $b_u$ is its \emph{birth} and $d_u$ is its \emph{death}, with $d_u = \infty$ for a class that survives to the final complex.

The multiset of intervals
\[
\operatorname{Bar}_q(K,f) := \bigl\{\!\bigl\{[b_u,d_u)\mid u\in\mathcal V^{(q)}\bigr\}\!\bigr\}
\]
is called the \emph{$q$-th barcode} of the filtered complex $(K,f)$. Each finite interval $[b_u,d_u)$ corresponds to the point $(b_u,d_u)$ in the $q$-th persistence diagram. The \emph{persistence} of the interval is the difference
\[
d_u-b_u.
\]
An interval of the form $[b_u,\infty)$ is called \emph{essential}. It represents a homology class that survives to the final complex and is assigned infinite persistence.

The interval decomposition theorem determines the multiset of intervals uniquely, including their multiplicities, but the particular isomorphism in~\eqref{eq:interval-decomposition} is generally not canonical. Consequently, the barcode does not determine a preferred cellular cycle representing each interval. Different reduction procedures, or different choices made within a reduction, may produce different cycle representatives while yielding the same barcode \cite{hang2021umatch}.

For $a\leq b$, the inclusion $i_{a,b}$ also induces a contravariant map on cohomology,
\[
i_{a,b}^*: H^q(K_b;\Bbbk) \longrightarrow H^q(K_a;\Bbbk).
\]
Under the natural identifications
\[
H^q(K_t;\Bbbk) \cong \operatorname{Hom}_{\Bbbk}\bigl(H_q(K_t;\Bbbk),\Bbbk\bigr),
\]
Lemma~\ref{lem:kronecker-naturality} identifies $i_{a,b}^*$ with the linear dual of
\[
(i_{a,b})_*: H_q(K_a;\Bbbk) \longrightarrow H_q(K_b;\Bbbk).
\]
Thus, after accounting for the reversal of the indexing direction, the persistent homology module and the corresponding persistent cohomology system determine the same barcode information \cite{desilva2011dualities}.

\subsection{Persistence Pairs}
\label{subsec:filtered-matrix-reduction}

We now recall the standard matrix-reduction formulation of persistent homology \cite{edelsbrunner2002topological,zomorodian2005computing, edelsbrunnerharer2010computational}. The notation below is chosen so that the change-of-basis matrix is retained, because its columns will later supply the selected cycle representatives.

To represent the filtered cellular chain complex by a matrix, we first choose a total order
\[
\sigma_1\prec\sigma_2\prec\cdots\prec\sigma_N
\]
on the cells of $K$. We require this order to be compatible both with the filtration and with the cellular boundary structure. That is,
\[
f(\sigma_i)<f(\sigma_j)\quad\Longrightarrow\quad i<j
\]
and
\[
\sigma_i\subseteq\overline{\sigma_j},\ \sigma_i\neq\sigma_j \quad\Longrightarrow\quad i<j.
\]
The first condition ensures that a cell with a smaller filtration value is placed earlier in the order. The second ensures that every proper cell contained in the closure of another cell is placed before that cell. These requirements are compatible because the filtration function satisfies
\[
\sigma_i\subseteq\overline{\sigma_j} \quad\Longrightarrow\quad f(\sigma_i)\leq f(\sigma_j).
\]
If two cells have the same filtration value and neither condition determines their relative position, the total order $\prec$ provides an arbitrary tie-breaking order.

The chosen order refines the original filtration to a \emph{cellwise filtration} $K^0:=\varnothing$ and
\[
K^p:=\bigcup_{\ell=1}^{p}\sigma_\ell
\]
for $1\leq p\leq N$. Thus,
\[
\varnothing = K^0 \subseteq K^1 \subseteq \cdots \subseteq K^N = K,
\]
and exactly one cell is added at each step. Every $K^p$ is a CW subcomplex. This means that if $\sigma_\ell \subseteq \overline{\sigma_p}$ with $\sigma_\ell\neq\sigma_p$, then $\ell<p$, so all cells required by the boundary structure of $\sigma_p$ are already contained in $K^{p-1}$.

The cellwise filtration is a refinement of the original filtration rather than a new filtration function. In particular, if several cells have a common filtration value $t$, the original filtration adds all of them simultaneously in passing from
$K_{t^-}$ to $K_t$. The corresponding intermediate complexes in the cellwise filtration depend on the chosen tie-breaking order and are auxiliary stages used for the matrix reduction. They need not correspond to distinct parameter values of the original filtration.

Let
\[
C_*(K;\Bbbk) := \bigoplus_{q\geq0} C_q(K;\Bbbk)
\]
be the total cellular chain space. The cellular boundary operators combine to give the linear map
\[
\partial:C_*(K;\Bbbk)\longrightarrow C_*(K;\Bbbk),
\]
whose restriction to $C_q(K;\Bbbk)$ is $\partial_q$. Using the ordered cells $(\sigma_1,\ldots,\sigma_N)$ as a basis of $C_*(K;\Bbbk)$, we identify cellular chains with coordinate column vectors. The matrix of $\partial$ in this basis is the \emph{boundary matrix} $D\in\Bbbk^{N\times N}$ with entries
\[
D_{ij} = [\sigma_j:\sigma_i].
\]
Equivalently, the $j$-th column of $D$ is the coordinate vector of the cellular boundary
\[
\partial\sigma_j = \sum_{i=1}^{N}D_{ij}\sigma_i.
\]

If $D_{ij}\neq0$, then $\sigma_i$ occurs with nonzero incidence coefficient in the boundary of $\sigma_j$. In particular,
$\sigma_i\subseteq\overline{\sigma_j}$ whenever $\sigma_i\neq\sigma_j$, and therefore the ordering condition gives
\[
i<j.
\]
Hence, every nonzero entry of $D$ lies strictly above the diagonal, so $D$ is strictly upper triangular. Moreover, the cellular identity $\partial^2=0$ is represented algebraically by $D^2=0$.

For a nonzero column vector $c\in\Bbbk^N$, we define its \emph{low index} by
\[
\operatorname{low}(c) := \max\{i\mid c_i\neq0\}.
\]
A matrix is \emph{reduced} if no two nonzero columns have the same low index.

Starting from the boundary matrix $D$, the standard left-to-right persistence algorithm applies elementary column operations until a reduced matrix $R$ is obtained. Recording the same column operations in a change of basis matrix $V$ gives
\begin{equation}
\label{eq:RDV}
R=DV,
\end{equation}
where $V$ is upper unitriangular, that is, upper triangular with $V_{ii}=1$ for $1\leq i\leq N$. We perform the reduction per dimension, so each column $V_i$ is supported only on cells having the same dimension as $\sigma_i$.

The left-to-right reduction, the low index pairing rule, and the resulting birth--death pairs are the standard boundary matrix reduction \cite{edelsbrunner2002topological,zomorodian2005computing}. Retaining transformation matrices also permits explicit recovery of cycle representatives; see, for example, \cite{hang2021umatch}.

Since $V$ is upper unitriangular, its $i$-th column has the form
\[
V_i = e_i+\sum_{\ell<i}V_{\ell i}e_\ell,
\]
where $e_i$ denotes the $i$-th coordinate vector. Thus, $V_i$ represents a chain containing $\sigma_i$ with coefficient $1$, together with a linear combination of earlier cells of the same dimension.

\begin{definition}
\label{def:positive-cells}
A cell $\sigma_i$ is called \emph{positive} if the $i$-th reduced column satisfies $R_i=0$, and \emph{negative} if $R_i\neq0$. If $\sigma_i$ is a positive $q$-cell, its \emph{selected birth cycle} is the $q$-chain
\begin{equation}
\label{eq:selected-birth-cycle}
z_i := \sum_{\substack{\ell\leq i\\dim\sigma_\ell=q}}V_{\ell i}\sigma_\ell.
\end{equation}
\end{definition}

Equivalently, in Definition~\ref{def:positive-cells},
\[
z_i = \sigma_i+ \sum_{\substack{\ell<i\\dim\sigma_\ell=q}}V_{\ell i}\sigma_\ell.
\]
Its coordinate column in the ordered cellular basis is $V_i$.

\begin{lemma}
\label{lem:selected-representative-cycle}
If $\sigma_i$ is a positive $q$-cell, then $z_i \in Z_q(K^i;\Bbbk)$.
\end{lemma}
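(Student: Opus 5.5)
The lemma makes two claims about $z_i$: it is a chain in the subcomplex $K^i$, and it is a cycle. I will check them in that order, using only the factorization $R=DV$ from~\eqref{eq:RDV}, the upper unitriangularity of $V$, and the per-dimension nature of the reduction.

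\textbf{Membership in $C_q(K^i;\Bbbk)$.} By~\eqref{eq:selected-birth-cycle}, $z_i$ is a linear combination of cells $\sigma_\ell$ with $\ell\le i$ and $\dim\sigma_\ell=q$. Since $V$ is upper triangular, $V_{\ell i}=0$ for $\ell>i$, so no later cells can appear. Every $\sigma_\ell$ with $\ell\le i$ lies in $K^i=\bigcup_{\ell\le i}\sigma_\ell$. Because $K^i$ is a CW subcomplex, $C_q(K^i;\Bbbk)$ is exactly the span of these $q$-cells, and so $z_i\in C_q(K^i;\Bbbk)$.

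\textbf{Vanishing boundary.} Under the identification of chains with coordinate columns, $z_i$ corresponds to $V_i$: the reduction is per dimension, so $V_i$ is supported only on $q$-cells, and the sum in~\eqref{eq:selected-birth-cycle} recovers all of it. The $i$-th column of $R=DV$ is then
\[
R_i = DV_i,
\]
which is the coordinate vector of $\partial z_i$ in $C_*(K;\Bbbk)$. Since $\sigma_i$ is positive, $R_i=0$, and therefore $\partial_q z_i=0$ in $C_{q-1}(K;\Bbbk)$.

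The remaining point is that this boundary computed in $K$ agrees with the boundary computed in $K^i$. The cellular boundary of a cell depends only on its attaching map, so the chain complex of a subcomplex is a subcomplex of $C_*(K;\Bbbk)$. Hence the inclusion $C_{q-1}(K^i;\Bbbk)\hookrightarrow C_{q-1}(K;\Bbbk)$ is injective and commutes with $\partial$, and so $\partial^{K^i}_q z_i=0$. Concretely, each column $D_\ell$ with $\ell\le i$ is supported on indices $<\ell\le i$, so $\partial z_i$ already lies in $C_{q-1}(K^i;\Bbbk)$.

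The only step needing any care is this compatibility between the ambient and subcomplex boundaries, and it follows from the cellular boundary being local to each cell. Everything else is bookkeeping with the triangularity of $V$ and $D$. Together, the two parts give $z_i\in Z_q(K^i;\Bbbk)$.
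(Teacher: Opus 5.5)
Your proof is correct and follows the paper's argument. Upper triangularity and dimension preservation of $V$ place $z_i$ in $C_q(K^i;\Bbbk)$, and $\partial_q z_i = DV_i = R_i = 0$ because $\sigma_i$ is positive; your extra check that the boundary computed in $K^i$ agrees with the one computed in $K$ is a point the paper leaves implicit, and you handle it correctly.
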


\begin{proof}
Since $V$ is upper triangular, $V_{\ell i}=0$ whenever $\ell>i$, and dimension preservation ensures that $V_i$ is supported only on $q$-cells. Hence, $z_i\in C_q(K^i;\Bbbk)$. The $i$-th column of~\eqref{eq:RDV} is $R_i=DV_i$. Since $D$ represents the cellular boundary operator and $V_i$ is the coordinate vector of $z_i$,
\[
\partial_q z_i=DV_i=R_i.
\]
Because $\sigma_i$ is positive, $R_i=0$. Therefore, $\partial_qz_i=0$, and hence $z_i\in Z_q(K^i;\Bbbk)$.
\end{proof}

For the succeeding discussions, we will use the following elementary facts repeatedly.

\begin{lemma}
\label{lem:reduced-matrix-facts}
Let $U\in\Bbbk^{N\times N}$ be upper unitriangular.
\begin{enumerate}
\item If $c\in\Bbbk^N$ is nonzero, then $\operatorname{low}(Uc)=\operatorname{low}(c)$.
\item The nonzero columns of a reduced matrix are linearly independent.
\end{enumerate}
\end{lemma}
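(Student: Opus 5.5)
For part 1, I would argue directly from the shape of an upper unitriangular matrix. Write $\ell=\operatorname{low}(c)$, so $c_\ell\neq0$ and $c_k=0$ for all $k>\ell$. The $k$-th coordinate of $Uc$ is
\[
(Uc)_k=\sum_{j}U_{kj}c_j=\sum_{j\geq k}U_{kj}c_j ,
\]
because $U_{kj}=0$ whenever $j<k$. Two cases then settle the claim.
\begin{itemize}
\item For $k>\ell$, every term in this sum has $j\geq k>\ell$, so $c_j=0$ and hence $(Uc)_k=0$.
\item For $k=\ell$, the only surviving term is $j=\ell$, so $(Uc)_\ell=U_{\ell\ell}c_\ell=c_\ell\neq0$.
\end{itemize}
Together these show $\operatorname{low}(Uc)=\ell=\operatorname{low}(c)$; in particular $Uc\neq0$. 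No earlier result is needed beyond the definitions of low index and unitriangularity.

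For part 2, let $R$ be reduced and let $R_{j_1},\dots,R_{j_s}$ be its nonzero columns. By the definition of a reduced matrix, their low indices $\ell_1,\dots,\ell_s$ are pairwise distinct. Suppose $\sum_{r}a_rR_{j_r}=0$ with not all $a_r$ zero. Among the indices with $a_r\neq0$, choose $r_0$ so that $\ell_{r_0}$ is maximal. Look at the coordinate in row $\ell_{r_0}$ of the combination:
\begin{itemize}
\item the column $R_{j_{r_0}}$ contributes $a_{r_0}\,(R_{j_{r_0}})_{\ell_{r_0}}\neq0$;
\item every other column with $a_r\neq0$ has $\ell_r<\ell_{r_0}$, since the low indices are distinct and $\ell_{r_0}$ is maximal, so its entry in row $\ell_{r_0}$ is zero.
\end{itemize}
Hence the row-$\ell_{r_0}$ coordinate of the combination is $a_{r_0}(R_{j_{r_0}})_{\ell_{r_0}}\neq0$, which contradicts the combination being zero. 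This is the standard echelon (pivot) argument.

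I do not expect either part to pose a real obstacle. The only care needed is the direction of the triangularity. The relevant convention is that $U_{kj}=0$ for $k>j$, so $Uc$ mixes each coordinate only with coordinates of larger index. That is exactly why the lowest nonzero entry is preserved: its row is the bottom of the support, and no coordinate from below can leak into it.
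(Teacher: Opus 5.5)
Your proposal is correct and follows essentially the same argument as the paper: in part 1, upper triangularity forces every entry of $Uc$ below row $\operatorname{low}(c)$ to vanish while $U_{\ell\ell}=1$ preserves the entry $c_\ell$, and in part 2 you take the nonzero-coefficient column with maximal low index and read off the coordinate in that row. Your version is slightly more explicit in writing out $(Uc)_k=\sum_{j\geq k}U_{kj}c_j$, but the reasoning is identical.
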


\begin{proof}
Let $i=\operatorname{low}(c)$. Then $c_i\neq0$ and $c_\ell=0$ for every $\ell>i$. Since $U$ is upper triangular, $Uc$ has no nonzero entries below row $i$. Since $U_{ii}=1$, its $i$-th entry is $c_i\neq0$. Hence,
\[
\operatorname{low}(Uc)=i.
\]

For the second statement, suppose, on the contraty, that a nontrivial linear combination of nonzero reduced columns is zero. Thus, for some nonempty index set $J$,
\[
\sum_{j\in J} a_j R_j = 0,
\]
where $a_j\neq 0$ for all $j\in J$. Choose $j_*\in J$ such that
\[
i:=\operatorname{low}(R_{j_*}) = \max_{j\in J}\operatorname{low}(R_j).
\]
Since $R$ is reduced, no other column $R_j$ with $j\in J$ has low index $i$. Hence, for every $j\in J\setminus\{j_*\}$,
\[
\operatorname{low}(R_j)<i,
\]
so $(R_j)_i=0$. Taking the $i$-th coordinate of the relation therefore gives
\[
0 = \sum_{j\in J} a_j(R_j)_i = a_{j_*}(R_{j_*})_i.
\]
But $a_{j_*}\neq0$ and $(R_{j_*})_i\neq0$ by the definition of $\operatorname{low}(R_{j_*})$. Since $\Bbbk$ is a field, their product is nonzero, which is a contradiction. Therefore, the nonzero columns of a reduced matrix are linearly independent.
\end{proof}

\begin{definition}
\label{def:reduction-pair}
Suppose $R_j\neq0$ and $\operatorname{low}(R_j)=i$. Then $(i,j)$ is called a \emph{persistence pair}, and we say that the cell $\sigma_i$ is \emph{paired} with $\sigma_j$. If $\sigma_i$ is positive and there is no column $R_j$ with $\operatorname{low}(R_j)=i$, then $\sigma_i$ is called \emph{unpaired}.
\end{definition}

Since $R$ is reduced, at most one such index $j$ can exist. For every positive cell $\sigma_i$, let
\[
\operatorname{pair}(i) :=
\begin{cases}
j,&\operatorname{low}(R_j)=i,\\
\infty,&\sigma_i\text{ is unpaired}.
\end{cases}
\]

A standard property of persistence reduction is that a nonzero reduced column pairs a positive cell with a cell one dimension higher \cite{edelsbrunner2002topological,zomorodian2005computing}. We give a proof in the present notation because the result is used repeatedly.

\begin{lemma}
\label{lem:pivot-positive}
If $\operatorname{low}(R_j)=i$, then $R_i=0$ and $\dim\sigma_j=\dim\sigma_i+1$.
\end{lemma}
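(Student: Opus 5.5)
The plan has two parts: the dimension statement and the vanishing of $R_i$.

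\emph{Dimension.} Since $V$ preserves dimension, $V_j$ is supported on cells of dimension $\dim\sigma_j$. Then $R_j=DV_j$ is a linear combination of boundaries of $q'$-cells with $q'=\dim\sigma_j$, so it is supported on $(q'-1)$-cells. The entry of $R_j$ in row $i$ is nonzero, so $\dim\sigma_i=\dim\sigma_j-1$. If $\dim\sigma_j=0$, then $\partial_0=0$ gives $R_j=0$, contradicting the existence of a low index.

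\emph{Vanishing of $R_i$.} I would argue by contradiction. Suppose $R_i\neq0$.
\begin{enumerate}
\item The column $R_j=DV_j$ represents the boundary $\partial z$, where $z$ is the chain with coordinate vector $V_j$. Since $D^2=0$, we get $DR_j=D^2V_j=0$, so $\partial R_j=0$.
\item Write $R_j=\sum_{\ell\le i}a_\ell e_\ell$ with $a_i\neq0$. Then $DR_j=a_iDe_i+\sum_{\ell<i}a_\ell De_\ell$.
\item Apply $D$ to $R_i=DV_i$, where $V_i$ is the $i$-th column of $V$. Since $V$ is upper unitriangular, $e_i=V_i-\sum_{\ell<i}V_{\ell i}e_\ell$. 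Iterating, $e_i$ lies in the span of $V_1,\dots,V_i$, with $V_i$ appearing with coefficient $1$.
\item Hence $R_j=\sum_{\ell\le i}b_\ell V_\ell$ with $b_i=a_i\neq0$. Applying $D$ gives
\[
0=DR_j=\sum_{\ell\le i}b_\ell R_\ell .
\]
\item The relation in the previous step is a linear combination of columns of $R$ whose coefficients include $b_i\neq0$ on the nonzero column $R_i$. Discarding the zero columns leaves a nontrivial relation among nonzero reduced columns. This contradicts Lemma~\ref{lem:reduced-matrix-facts}(2).
\end{enumerate}
Therefore $R_i=0$.

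The main obstacle is step 3: justifying that $e_i$ expands in $V_1,\dots,V_i$ with coefficient exactly $1$ on $V_i$. This follows because $V^{-1}$ is also upper unitriangular, and $e_i=V(V^{-1}e_i)$. Everything else is immediate from $D^2=0$ and the earlier lemma.
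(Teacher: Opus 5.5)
Your proposal is correct and matches the paper's proof. The paper likewise sets $\lambda=V^{-1}R_j$ and uses upper unitriangularity of $V^{-1}$ to get $\lambda_i\neq0$ and $\lambda_\ell=0$ for $\ell>i$; it then applies $D$ to obtain $0=\sum_{\ell\le i}\lambda_\ell R_\ell$, invokes linear independence of the nonzero reduced columns, and uses the same support argument for the dimension claim (your step 2 is never used and can be dropped).
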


\begin{proof}
Set $\lambda:=V^{-1}R_j$. Since $V^{-1}$ is upper unitriangular, Lemma~\ref{lem:reduced-matrix-facts} gives
\[
\operatorname{low}(\lambda) = \operatorname{low}(R_j) = i.
\]
Hence, $\lambda_i\neq 0$ and $\lambda_\ell=0$ for all $\ell>i$. Since $D^2=0$, we have $DR_j=0$. Using $R=DV$ and $R_j=V\lambda$, we obtain
\[
0 =DR_j =DV\lambda =R\lambda =\sum_{\ell\leq i}\lambda_\ell R_\ell.
\]
If $R_i\neq 0$, then the term $\lambda_iR_i$ is nonzero. By Lemma~\ref{lem:reduced-matrix-facts}, the nonzero columns of $R$ are linearly independent, so this term cannot be cancelled by the remaining terms. Therefore, $R_i=0$. Thus $\sigma_i$ is positive.

Finally, $V_j$ is supported only on cells having the same dimension as $\sigma_j$. Since $R_j=DV_j$ is its cellular boundary, $R_j$ is supported only on cells one dimension lower. Because row $i$ occurs with nonzero coefficient in $R_j$,
\[
\dim\sigma_i = \dim\sigma_j-1.
\]
Equivalently, $\dim\sigma_j = \dim\sigma_i+1$.
\end{proof}

For a positive $q$-cell $\sigma_i$ and any $p\geq i$, the selected birth cycle $z_i$ remains a $q$-cycle in $K^p$. We denote its homology class by
\[
[z_i]_p \in H_q(K^p;\Bbbk).
\]
Notice that enlarging the complex does not destroy the cycle condition $\partial_q z_i=0$. A class dies only when its representing cycle becomes a boundary in a later complex.

\begin{definition}
\label{def:active-positive-index}
A positive $q$-cell index $i$ is \emph{active at stage $p$} if $i\leq p<\operatorname{pair}(i)$. We denote the set of active positive $q$-cell indices at stage $p$ by
\[
\mathcal A_q(p) := \left\{i\ \middle|\ \dim\sigma_i=q,\; R_i=0,\; i\leq p<\operatorname{pair}(i)\right\}.
\]
\end{definition}

The change-of-basis matrix from the boundary matrix reduction can be used to recover cycle representatives and bases of cycle and boundary spaces \cite{edelsbrunnerharer2010computational,hang2021umatch}. The following theorem gives the precise stagewise basis statement needed for a persistence pairing graph.

\begin{theorem}
\label{thm:active-basis}
For every stage $p$ and every dimension $q$,
\[
\mathcal B_q(p) := \{[z_i]_p\mid i\in\mathcal A_q(p)\}
\]
is a basis of $H_q(K^p;\Bbbk)$.
\end{theorem}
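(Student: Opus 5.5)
I will prove the statement by a direct linear-algebra argument on the chain level of $K^p$. The strategy is to write down explicit bases of $Z_q(K^p;\Bbbk)$ and $B_q(K^p;\Bbbk)$ coming from the columns of $V$ and $R$ with indices at most $p$. Taking the quotient then gives the homology basis.

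\textbf{Step 1 (bases for cycles and boundaries).} The columns $V_1,\dots,V_p$ are supported on rows $\le p$ because $V$ is upper unitriangular. They are therefore a basis of the chain space of $K^p$, viewed as vectors supported on the first $p$ coordinates. Splitting by dimension, the columns $V_\ell$ with $\ell\le p$ and $\dim\sigma_\ell=q$ form a basis of $C_q(K^p;\Bbbk)$.

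Now take a $q$-chain $c=\sum_\ell a_\ell V_\ell$. Then $\partial c=\sum_\ell a_\ell R_\ell$. The nonzero columns $R_\ell$ are linearly independent by Lemma~\ref{lem:reduced-matrix-facts}, so $\partial c=0$ if and only if $a_\ell=0$ for every negative $\ell$. Hence
\[
Z_q(K^p;\Bbbk)=\operatorname{span}\{z_i\mid i\le p,\ \dim\sigma_i=q,\ R_i=0\},
\]
and these $z_i$ form a basis. Lemma~\ref{lem:selected-representative-cycle} confirms that they are cycles.

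Similarly, $B_q(K^p;\Bbbk)$ is spanned by the $R_j=DV_j$ with $j\le p$, $\dim\sigma_j=q+1$ and $R_j\ne0$. By Lemma~\ref{lem:reduced-matrix-facts} these columns are linearly independent, so they form a basis of $B_q(K^p;\Bbbk)$.

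\textbf{Step 2 (the key triangular change of basis).} For each such $j$, let $i=\operatorname{low}(R_j)$. By Lemma~\ref{lem:pivot-positive}, $\sigma_i$ is a positive $q$-cell, and $i<j\le p$ holds by strict upper triangularity. Since $R_j\in Z_q(K^p;\Bbbk)$, we can expand $R_j=\sum_k \lambda_k z_k$ over the basis from Step 1. Because $V$ is unitriangular, the argument of Lemma~\ref{lem:reduced-matrix-facts}(1) applied to $\lambda=V^{-1}R_j$ gives $\operatorname{low}(\lambda)=i$. Therefore
\[
R_j=\lambda_i z_i+\sum_{k<i}\lambda_k z_k,\qquad \lambda_i\ne0.
\]

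The pivot map $j\mapsto\operatorname{low}(R_j)$ is injective because $R$ is reduced. Its image is exactly the set of positive $q$-indices $i\le p$ with $\operatorname{pair}(i)\le p$, that is, the positive indices that are \emph{not} in $\mathcal A_q(p)$.

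\textbf{Step 3 (conclusion).} Replace each dead cycle $z_i$, where $i=\operatorname{low}(R_j)$ with $j\le p$, by $R_j$. Order the basis by index. The transition matrix from $\{z_i\}$ to the new family is triangular with nonzero diagonal entries: each $R_j$ equals $\lambda_i z_i$ plus earlier terms, and the active $z_i$ are kept unchanged. It is therefore invertible.

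Hence $\{R_j\}\cup\{z_i\mid i\in\mathcal A_q(p)\}$ is a basis of $Z_q(K^p;\Bbbk)$, and its first part is a basis of $B_q(K^p;\Bbbk)$. The classes of the remaining vectors then form a basis of the quotient $H_q(K^p;\Bbbk)$, which is $\mathcal B_q(p)$.

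The main obstacle is Step 2, specifically the careful bookkeeping. We must check that the expansion of $R_j$ in the $z$-basis has its leading term at $\operatorname{low}(R_j)$. This uses $\operatorname{low}(V^{-1}R_j)=\operatorname{low}(R_j)$, together with the fact that the coefficients of $\lambda$ on negative indices vanish because $R_j$ is a cycle. We must also check that the pivots are exactly the non-active positive indices at stage $p$, which relies on $\operatorname{low}(R_j)<j$.
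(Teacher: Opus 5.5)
Your proof is correct and follows the same route as the paper. The $V$-columns give the cycle basis $\{z_i\}$, the nonzero $R$-columns give the boundary basis, and $\operatorname{low}(V^{-1}R_j)=\operatorname{low}(R_j)$ shows that the boundary relations have distinct leading positive indices, exactly the non-active ones. Your final step is tighter than the paper's: exchanging each dead $z_i$ for $R_{\operatorname{pair}(i)}$ gives $Z_q(K^p;\Bbbk)=B_q(K^p;\Bbbk)\oplus\operatorname{span}_{\Bbbk}\{z_i\mid i\in\mathcal A_q(p)\}$, which yields spanning and independence at once, whereas the paper only writes each dead class in terms of earlier classes and then asserts that the remaining classes form a basis.
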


\begin{proof}
Consider the columns $V_i$ satisfying $i\leq p$ and $\dim\sigma_i=q$. Because $V$ is upper unitriangular and preserves chain dimension, these columns form a basis of $C_q(K^p;\Bbbk)$. Hence, every $q$-chain $c\in C_q(K^p;\Bbbk)$ can be written uniquely as
\[
c = \sum_{\substack{i\leq p\\ \dim\sigma_i=q}}a_iV_i.
\]
Applying the boundary matrix gives
\[
Dc = \sum_{\substack{i\leq p\\ \dim\sigma_i=q}}a_iDV_i = \sum_{\substack{i\leq p\\ \dim\sigma_i=q}} a_iR_i.
\]
The nonzero columns of $R$ are linearly independent. Therefore, $Dc=0$ if and only if $a_i=0$ for every $i$ with $R_i\neq 0$. Consequently,
\[
\mathcal{Z} := \left\{z_i\ \mid \ i \leq p,\;\dim\sigma_i=q, \text{ and }R_i=0\right\}
\]
is a basis of $Z_q(K^p;\Bbbk)$.

The columns $V_j$ satisfying $j\leq p$ and $\dim\sigma_j=q+1$ form a basis of $C_{q+1}(K^p;\Bbbk)$. Their boundaries are
\[
DV_j=R_j.
\]
Hence, the nonzero columns
\[
\left\{R_j\ \mid j\leq p,\;\dim\sigma_j=q+1,\text{ and } R_j\neq 0\right\}
\]
span $B_q(K^p;\Bbbk)$. They are linearly independent because $R$ is reduced, so they form a basis of the boundary space.

Let $R_j\neq 0$ with $j\leq p$ and $\dim\sigma_j=q+1$, and set $i:=\operatorname{low}(R_j)$. Since $R_j$ is a $q$-boundary, it is also a $q$-cycle. Hence, it has a unique expansion in the cycle basis $\mathcal{Z}$.

Let $\lambda:=V^{-1}R_j$. By Lemma~\ref{lem:reduced-matrix-facts},
\[
\operatorname{low}(\lambda) = \operatorname{low}(R_j) = i.
\]
Therefore, $\lambda_i\neq 0$ and $\lambda_k=0$ for all $k>i$.

Since only positive cycle basis vectors can occur in the expansion of a cycle, we may write
\[
R_j = \lambda_i z_i + \sum_{\substack{k<i\\ R_k=0}}\lambda_k z_k,
\]
where $\lambda_i\neq 0$. By Lemma~\ref{lem:pivot-positive}, the leading index $i$ is positive.

Distinct nonzero columns $R_j$ have distinct low indices because $R$ is reduced. Thus, the boundary relations have distinct leading positive indices. These leading indices are precisely the positive indices $i$ satisfying $\operatorname{pair}(i)\leq p$.

For such an index $i$, let $j=\operatorname{pair}(i)$. Since
\[
R_j = \lambda_i z_i + \sum_{\substack{k<i\\R_k=0}} \lambda_k z_k
\]
given $\lambda_i \neq 0$ and $R_j$ is a boundary in $K^p$, its homology class is zero. Hence,
\[
0 = \lambda_i[z_i]_p + \sum_{\substack{k<i\\R_k=0}}\lambda_k[z_k]_p.
\]
Since $\lambda_i\neq0$, we obtain
\[
[z_i]_p = -\lambda_i^{-1}\sum_{\substack{k<i\\R_k=0}}\lambda_k[z_k]_p.
\]
Thus, whenever the paired death cell of $i$ has appeared, the class $[z_i]_p$ is no longer an independent homology class, that is, it is determined by classes with smaller indices.

The remaining basis classes are precisely those satisfying $i\leq p<\operatorname{pair}(i)$, which are exactly those with $i\in\mathcal A_q(p)$. Hence,
\[
\mathcal B_q(p) = \{[z_i]_p\mid i\in\mathcal A_q(p)\}
\]
is a basis of $H_q(K^p;\Bbbk)$.
\end{proof}

Theorem~\ref{thm:active-basis} describes a homology basis selected by the change-of-basis matrix $V$. A negative reduced column determines a different, but closely related, cycle. This cycle is the homology class that is actually killed when the corresponding negative cell enters the cellwise filtration.

Reduced columns also encode cycle representatives associated with persistence pairs; see \cite{edelsbrunnerharer2010computational,hang2021umatch}. The next proposition records the precise death cycle property needed below.

\begin{proposition}
\label{prop:killed-cycle}
Suppose $\operatorname{low}(R_j)=i$ and $\dim\sigma_i=q$. Then $R_j$, as a cellular chain, is a $q$-cycle supported in $K^i$. Its homology class is nonzero in $H_q(K^{j-1};\Bbbk)$, becomes zero in $H_q(K^j;\Bbbk)$, and
\[
\ker\!\left(H_q(K^{j-1};\Bbbk) \longrightarrow H_q(K^j;\Bbbk)\right) = \operatorname{span}_{\Bbbk}\{[R_j]_{j-1}\}.
\]
\end{proposition}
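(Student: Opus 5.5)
The plan is to read everything off the expansion of $R_j$ in the selected cycle basis established in the proof of Theorem~\ref{thm:active-basis}, combined with a dimension count for the one-cell step $K^{j-1}\subseteq K^j$.

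\emph{Support and cycle property.} By Lemma~\ref{lem:pivot-positive}, $\dim\sigma_j=q+1$. Hence $R_j=\partial_{q+1}(V_j)$ is a $q$-chain, and it is a $q$-cycle because $\partial_q\partial_{q+1}=0$. Since $\operatorname{low}(R_j)=i$, it is supported on cells of index at most $i$, so it lies in $C_q(K^i;\Bbbk)$. Since $D$ is strictly upper triangular we have $i<j$, so $R_j$ is also a cycle in $K^{j-1}$. It is a boundary in $K^j$ because $V_j$ is supported on cells of index at most $j$, so $V_j\in C_{q+1}(K^j;\Bbbk)$. Thus $[R_j]_j=0$.

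\emph{Nonvanishing in $K^{j-1}$.} As in the proof of Theorem~\ref{thm:active-basis}, set $\lambda=V^{-1}R_j$. Then
\[
R_j=\lambda_i z_i+\sum_{k<i,\,R_k=0}\lambda_k z_k,\qquad \lambda_i\neq 0.
\]
We pass to classes in $H_q(K^{j-1};\Bbbk)$. The index $i$ is active at stage $j-1$, because $i\le j-1<j=\operatorname{pair}(i)$.

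Every inactive $k<i$ in the sum satisfies $\operatorname{pair}(k)\le j-1$. Its class $[z_k]_{j-1}$ can therefore be rewritten, using the relation from the proof of Theorem~\ref{thm:active-basis}, as a combination of classes $[z_{k'}]_{j-1}$ with $k'<k<i$. Iterating this downward (it terminates since indices decrease) expresses $[R_j]_{j-1}$ in the basis $\mathcal B_q(j-1)$. In this expression the coefficient of $[z_i]_{j-1}$ is still $\lambda_i$, since no rewriting step ever produces $z_i$.

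By Theorem~\ref{thm:active-basis}, the coordinates in $\mathcal B_q(j-1)$ are unique, so $[R_j]_{j-1}\neq 0$. Checking carefully that the iterated rewriting never reintroduces index $i$ is the step I expect to need the most care. It follows because every substitution only involves strictly smaller indices.

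\emph{Kernel.} The kernel contains $\operatorname{span}\{[R_j]_{j-1}\}$, which has dimension $1$, so it suffices to show the kernel has dimension at most one. By Theorem~\ref{thm:active-basis},
\[
\dim H_q(K^{j-1})=|\mathcal A_q(j-1)|,\qquad \dim H_q(K^j)=|\mathcal A_q(j)|.
\]
Since $\sigma_j$ is a negative $(q+1)$-cell paired with $i$, passing from stage $j-1$ to stage $j$ removes exactly $i$ from the active set and adds nothing, so $\mathcal A_q(j)=\mathcal A_q(j-1)\setminus\{i\}$.

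Moreover, the induced map is surjective. Indeed, $K^j$ differs from $K^{j-1}$ only by a $(q+1)$-cell, so $C_q(K^j)=C_q(K^{j-1})$, and every $q$-cycle of $K^j$ is already a cycle of $K^{j-1}$.

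Rank--nullity then gives a kernel of dimension $|\mathcal A_q(j-1)|-|\mathcal A_q(j)|=1$. This equals the span of the nonzero class $[R_j]_{j-1}$, as claimed.
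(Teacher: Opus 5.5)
Your proof is correct. After the first paragraph (cycle property, support in $K^i$, and $[R_j]_j=0$), which matches the paper, you take a different route.

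\textbf{The paper's route.} The paper stays at the chain level throughout.
\begin{itemize}
\item For nonvanishing, it uses the fact that $B_q(K^{j-1};\Bbbk)$ is spanned by the earlier nonzero reduced columns $R_\ell$ ($\ell<j$). Since nonzero reduced columns are linearly independent, $R_j\notin B_q(K^{j-1};\Bbbk)$, so $[R_j]_{j-1}\neq 0$ at once.
\item For the kernel, it notes that the $q$-cycle space does not change and that $R_j=D_j+\sum_{\ell<j}V_{\ell j}D_\ell$. Hence $B_q(K^j;\Bbbk)=B_q(K^{j-1};\Bbbk)\oplus\operatorname{span}_{\Bbbk}\{R_j\}$, and the kernel is identified directly as the image of this new boundary, with no dimension count.
\end{itemize}

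\textbf{Your route.} You use Theorem~\ref{thm:active-basis} as a black box twice.
\begin{itemize}
\item For nonvanishing, you compute the coordinate of $[R_j]_{j-1}$ along $[z_i]_{j-1}$ in $\mathcal B_q(j-1)$. Your downward rewriting is sound: $i$ is active at stage $j-1$, and each substitution introduces only strictly smaller indices.
\item For the kernel, you combine $\mathcal A_q(j)=\mathcal A_q(j-1)\setminus\{i\}$, surjectivity from the equality of $q$-cycle spaces, and rank--nullity.
\end{itemize}

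\textbf{Trade-offs.} The paper's argument is shorter and avoids the iterative rewriting. It also exhibits the kernel structurally as $B_q(K^j)/B_q(K^{j-1})$ rather than by counting. Yours gives extra information: $[R_j]_{j-1}$ has the nonzero coordinate $\lambda_i$ on the selected class $[z_i]_{j-1}$. The same argument works at any stage $p$ with $i\le p<j$, and the paper relies on this kind of coordinate information later, in Proposition~\ref{prop:interval-cycle-birth}. If you only need nonvanishing, you can replace the rewriting with the one-line observation that $R_j$ is not in the span of the earlier nonzero reduced columns.
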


\begin{proof}
By Lemma~\ref{lem:pivot-positive}, $\dim\sigma_j=q+1$. Since the reduction preserves chain dimension, the column $V_j$ represents a $(q+1)$-chain. Therefore, $R_j=DV_j$ is a $q$-chain.

Because $\operatorname{low}(R_j)=i$, all nonzero entries of $R_j$ occur in rows with index at most $i$. Hence, $R_j$ is supported on $q$-cells contained in $K^i$, so $R_j\in C_q(K^i;\Bbbk)$. Moreover,
\[
DR_j = D(DV_j) = D^2V_j = 0.
\]
Thus, $R_j$ is a $q$-cycle.

Since $V$ is upper unitriangular and preserves dimension, the column $V_j$ has the form
\[
V_j = e_j + \sum_{\substack{\ell<j\\ \dim\sigma_\ell=q+1}} V_{\ell j}e_\ell.
\]
Hence, $V_j$ represents a $(q+1)$-chain in $K^j$. Therefore, $R_j=DV_j$ is a boundary in $K^j$, and consequently $[R_j]_j=0$.

We next show that $R_j$ is not already a boundary in $K^{j-1}$. By the proof of Theorem~\ref{thm:active-basis},
\[
B_q(K^{j-1};\Bbbk) = \operatorname{span}_{\Bbbk}\left\{R_\ell \ \middle|\ \ell<j,\; \dim\sigma_\ell=q+1, \text{ and } R_\ell\neq 0\right\},
\]
and these nonzero reduced columns are linearly independent. If $R_j$ belonged to this boundary space, then $R_j$ would be a linear combination of earlier nonzero reduced columns. This would contradict the linear independence of the nonzero columns of the reduced matrix. Hence, $R_j\notin B_q(K^{j-1};\Bbbk)$, and therefore $[R_j]_{j-1}\neq0$.

Passing from $K^{j-1}$ to $K^j$ adds only the $(q+1)$-cell $\sigma_j$. Consequently, the $q$-chain group and the $q$-cycle space do not change. That is, 
\[
C_q(K^{j-1};\Bbbk) = C_q(K^j;\Bbbk) \quad\text{and}\quad Z_q(K^{j-1};\Bbbk) = Z_q(K^j;\Bbbk).
\]

On the other hand, the $q$-boundary space acquires one new independent boundary. Indeed, since
\[
V_j = e_j + \sum_{\substack{\ell<j\\ \dim\sigma_\ell=q+1}} V_{\ell j}e_\ell,
\]
we have
\[
R_j = DV_j = D_j + \sum_{\substack{\ell<j\\ \dim\sigma_\ell=q+1}} V_{\ell j}D_\ell.
\]
The second term is already contained in $B_q(K^{j-1};\Bbbk)$. Thus, adjoining the original boundary $D_j$ or the reduced boundary $R_j$ produces the same enlarged boundary space. Since $R_j\notin B_q(K^{j-1};\Bbbk)$, we obtain the direct sum decomposition
\[
B_q(K^j;\Bbbk) = B_q(K^{j-1};\Bbbk) \oplus \operatorname{span}_{\Bbbk}\{R_j\}.
\]

Because the cycle space is unchanged, the inclusion induced map $H_q(K^{j-1};\Bbbk) \longrightarrow H_q(K^j;\Bbbk)$ is obtained by modding out the same cycle space by the enlarged boundary space. Its kernel is therefore precisely the one-dimensional subspace generated by the class of the newly added boundary. In other words,
\[
\ker\!\left(H_q(K^{j-1};\Bbbk) \longrightarrow H_q(K^j;\Bbbk) \right) = \operatorname{span}_{\Bbbk}\{[R_j]_{j-1}\}.
\]
\end{proof}

For a positive index $i$, it is useful to distinguish the selected birth cycle $z_i$ from a cycle that represents the persistence interval associated with $i$ throughout its full lifetime. 

\begin{definition}
\label{def:interval-cycle}
Let $\sigma_i$ be a positive $q$-cell. The \emph{interval cycle} is
\begin{equation}
\label{eq:interval-cycle}
\rho_i :=
\begin{cases}
R_j, & \text{if }\operatorname{pair}(i)=j<\infty,\\
z_i, & \text{if }\operatorname{pair}(i)=\infty.
\end{cases}
\end{equation}
\end{definition}

In an interval cycle, each reduced column $R_j$ is identified with the cellular chain whose coordinate vector is $R_j$.

For a finite persistence pair $(i,j)$, the choice $\rho_i=R_j$ is different from the selected birth cycle $z_i$. The following result explains why $R_j$ represents the class born at $i$.

\begin{proposition}
\label{prop:interval-cycle-birth}
Let $\sigma_i$ be a positive $q$-cell with $\operatorname{pair}(i)=j<\infty$. Then $\rho_i=R_j$ is a $q$-cycle contained in $K^i$, and its homology class is born at stage $i$.
\end{proposition}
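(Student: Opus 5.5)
The first part is already contained in Proposition~\ref{prop:killed-cycle}. Since $\operatorname{low}(R_j)=i$, that result shows $R_j$ is a $q$-cycle supported in $K^i$. So $\rho_i=R_j$ lies in $Z_q(K^i;\Bbbk)$ and defines classes $[R_j]_p$ for every $p\geq i$.

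To say the class is \emph{born at stage $i$}, I will prove two things. First, $[R_j]_i$ does not lie in the image of $H_q(K^{i-1};\Bbbk)\to H_q(K^i;\Bbbk)$. Second, it is nonzero at every stage $p$ with $i\le p<j$.

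\textbf{Nonvanishing.} For $i\le p<j$, the class $[R_j]_p$ is the image of $[R_j]_{j-1}$ in the opposite direction, so I cannot simply push forward from $j-1$. I argue directly instead. By the proof of Theorem~\ref{thm:active-basis}, $B_q(K^p;\Bbbk)$ is spanned by the nonzero reduced columns $R_\ell$ with $\ell\le p$. All of these have $\ell<j$. If $R_j$ lay in this span, the nonzero columns of $R$ would be linearly dependent, contradicting Lemma~\ref{lem:reduced-matrix-facts}. Hence $[R_j]_p\neq 0$ for $i\le p<j$.

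\textbf{Not in the image from $K^{i-1}$.} I will use the expansion from the proof of Theorem~\ref{thm:active-basis}:
\[
R_j=\lambda_i z_i+\sum_{k<i,\,R_k=0}\lambda_k z_k,\qquad \lambda_i\neq 0.
\]
Suppose $[R_j]_i$ came from $K^{i-1}$. Then $R_j=w+b$ with $w\in Z_q(K^{i-1};\Bbbk)$ and $b\in B_q(K^i;\Bbbk)$. Since $\sigma_i$ is a positive $q$-cell, passing from $K^{i-1}$ to $K^i$ adds no $(q+1)$-cell, so $B_q(K^i;\Bbbk)=B_q(K^{i-1};\Bbbk)$. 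Therefore $R_j\in Z_q(K^{i-1};\Bbbk)$, meaning $R_j$ is supported on cells of index less than $i$. This contradicts $\operatorname{low}(R_j)=i$, which says the $\sigma_i$-coefficient of $R_j$ is nonzero.

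The same argument shows the class is not a multiple of an old class plus a boundary. So it is a genuinely new class at stage $i$, and $\ker$ considerations are unnecessary for the birth statement.

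The main obstacle is the support step. I must justify that a cycle in $K^i$ with nonzero $\sigma_i$-coefficient cannot be homologous in $K^i$ to a cycle of $K^{i-1}$. This rests on the equality $B_q(K^i)=B_q(K^{i-1})$, together with the fact that elements of $C_q(K^{i-1})$ have zero $\sigma_i$-coordinate. Both facts follow from the cellwise filtration adding the single cell $\sigma_i$ of dimension $q$.
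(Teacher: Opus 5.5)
Your proof is correct, but its key step takes a different route from the paper's.

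The paper expands $R_j=\lambda_i z_i+\sum_{k<i,\,R_k=0}\lambda_k z_k$ using $\lambda=V^{-1}R_j$ and then argues in homology. By Theorem~\ref{thm:active-basis}, the image of $H_q(K^{i-1};\Bbbk)\to H_q(K^i;\Bbbk)$ is spanned by the active selected classes of index $<i$, while $[z_i]_i$ is a new basis element, so the nonzero coefficient $\lambda_i$ keeps $[R_j]_i$ out of that image.

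You argue at the chain level instead. Because $\sigma_i$ is a $q$-cell, $B_q(K^i;\Bbbk)=B_q(K^{i-1};\Bbbk)$. So anything homologous in $K^i$ to a cycle of $K^{i-1}$ lies in $C_q(K^{i-1};\Bbbk)$ and has zero $\sigma_i$-coordinate, which $\operatorname{low}(R_j)=i$ rules out for $R_j$. Your route is more elementary and self-contained: it needs neither the basis theorem nor the change of coordinates by $V^{-1}$. In fact, the expansion you display is never used. The paper's route instead shows that $[\rho_i]_i$ has a nonzero $[z_i]_i$-coordinate. That links the interval cycle to the selected birth cycle on which the graph construction rests.

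Your extra nonvanishing step for $i\le p<j$ is also correct; the paper proves this later, in Theorem~\ref{thm:persistence pairing}. However, your remark that you cannot push forward from $j-1$ is misplaced. The inclusion $K^p\hookrightarrow K^{j-1}$ sends $[R_j]_p$ to $[R_j]_{j-1}\neq 0$, so $[R_j]_p=0$ would already contradict Proposition~\ref{prop:killed-cycle}. That contrapositive is exactly how the paper argues.
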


\begin{proof}
Since $R_j=DV_j$, the identity $D^2=0$ gives $DR_j=0$. Hence, $R_j$ is a cycle. Moreover, $\operatorname{low}(R_j)=i$, so every nonzero entry of $R_j$ occurs in a row with index at most $i$. Therefore $R_j$ is supported in $K^i$.

It remains to show that its homology class is born precisely at stage $i$. Set $\lambda:=V^{-1}R_j$. Since $V^{-1}$ is upper unitriangular,
\[
\operatorname{low}(\lambda) = \operatorname{low}(R_j) = i.
\]
Consequently, $\lambda_i\neq 0$ and $\lambda_\ell=0$ for all $\ell>i$.

Because $R_j$ is a cycle, its expansion in the $V$ basis contains only positive columns. Thus, as a cellular chain,
\[
R_j = \lambda_i z_i + \sum_{\substack{\ell<i\\ R_\ell=0}}\lambda_\ell z_\ell,
\]
where $\lambda_i\neq 0$.

The cell $\sigma_i$ is a positive $q$-cell. Passing from $K^{i-1}$ to $K^i$ therefore adds a $q$-cell but no $(q+1)$-cell, so no new $q$-boundaries are created at this step. By Theorem~\ref{thm:active-basis}, the image of $H_q(K^{i-1};\Bbbk) \longrightarrow H_q(K^i;\Bbbk)$ is spanned by the active selected classes whose indices are strictly smaller than $i$, whereas $[z_i]_i$ is the new basis class created at stage $i$.

The coefficient of $z_i$ in the expansion of $R_j$ is $\lambda_i\neq 0$. Boundary relations involving earlier positive indices have leading index strictly smaller than $i$, so passing to homology cannot eliminate this nonzero $[z_i]_i$ component. Hence, $[R_j]_i$ does not lie in the image of $H_q(K^{i-1};\Bbbk)$. Therefore, the class represented by $R_j$ is born at stage $i$.
\end{proof}

The correspondence between reduced matrix pairs and persistence intervals is the classical persistence pairing result \cite{edelsbrunner2002topological,zomorodian2005computing, edelsbrunnerharer2010computational}. We state it in the present notation and give a proof that also keeps track of the interval cycles introduced above.

\begin{theorem}
\label{thm:persistence pairing}
For the cellwise filtration
\[
K^0 \subseteq K^1 \subseteq \cdots \subseteq K^N,
\]
every persistence pair $(i,j)$ determines the interval
\[
[i,j),
\]
and every unpaired positive index $i$ determines the interval
\[
[i,\infty).
\]
For every positive index $i$, the interval cycle $\rho_i$ defined in~\eqref{eq:interval-cycle} represents the corresponding homology class throughout exactly this lifetime.

For the original filtration determined by $f$, a persistence pair $(i,j)$ determines the interval
\[
[f(\sigma_i),f(\sigma_j)),
\]
whereas an unpaired positive cell $\sigma_i$ determines
\[
[f(\sigma_i),\infty).
\]
If $f(\sigma_i)=f(\sigma_j)$, then the interval $[i,j)$ is created entirely by the auxiliary tie-breaking refinement and has zero length on the original filtration value scale. Such an interval is omitted from the usual
barcode.

\end{theorem}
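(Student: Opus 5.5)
Our plan is to show that, in the cellwise filtration, the persistence module $p\mapsto H_q(K^p;\Bbbk)$ splits as a direct sum of interval modules indexed by the positive $q$-cells, with the summand for $i$ spanned by $[\rho_i]_p$ on $[i,\operatorname{pair}(i))$. By uniqueness in the interval decomposition~\eqref{eq:interval-decomposition}, this identifies the cellwise barcode. The second step is then to transport this barcode to the original filtration through the reindexing $p\mapsto f(\sigma_p)$.

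\textbf{Step 1 (a modified basis).} For each $p$, we replace the selected classes by the interval classes
\[
\{[\rho_i]_p \mid i\in\mathcal A_q(p)\}.
\]
For unpaired $i$ we have $\rho_i=z_i$. For paired $i$, the proof of Proposition~\ref{prop:interval-cycle-birth} gives
\[
\rho_i=R_j=\lambda_i z_i+\sum_{\ell<i,\,R_\ell=0}\lambda_\ell z_\ell,\qquad \lambda_i\neq0.
\]
In $H_q(K^p)$, each class $[z_\ell]_p$ with $\ell$ inactive can be rewritten in terms of active classes of smaller index, by the relations in the proof of Theorem~\ref{thm:active-basis}. Therefore, relative to the basis $\mathcal B_q(p)$, ordered by index, the family $\{[\rho_i]_p\}_{i\in\mathcal A_q(p)}$ is triangular with nonzero diagonal $\lambda_i$. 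By Theorem~\ref{thm:active-basis}, it is again a basis of $H_q(K^p)$. This triangularity check is the main point requiring care. The rewriting of inactive $[z_\ell]_p$ must only introduce indices smaller than $\ell$. We would argue this by induction on the index, using the displayed relation for $[z_i]_p$ in that proof.

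\textbf{Step 2 (compatibility with structure maps).} Since $\rho_i$ is a fixed chain, inclusion sends $[\rho_i]_{p}$ to $[\rho_i]_{p+1}$. For $p+1<\operatorname{pair}(i)$ this is again a basis element. At $p+1=j=\operatorname{pair}(i)$, Proposition~\ref{prop:killed-cycle} shows that $[R_j]_{j}=0$. If $\sigma_{p+1}$ is positive, the new basis element $[\rho_{p+1}]_{p+1}$ has nothing mapping to it; this is the birth at $i$, consistent with Proposition~\ref{prop:interval-cycle-birth}. Hence, with respect to these bases, every structure map sends basis elements to basis elements or to zero exactly according to the intervals $[i,\operatorname{pair}(i))$. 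The module is thus isomorphic to $\bigoplus_i I[i,\operatorname{pair}(i))$, and $\rho_i$ represents the summand on exactly its lifetime. For $p<i$ the chain $\rho_i$ is not supported in $K^p$, since its low index is $i$, so the class does not exist before stage $i$.

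\textbf{Step 3 (original filtration).} We have $K_t=K^{p(t)}$, where $p(t)=\max\{p\mid f(\sigma_p)\le t\}$, because the cell order is compatible with $f$. The module $t\mapsto H_q(K_t)$ is therefore the restriction of the cellwise module along the monotone map $t\mapsto p(t)$. Restricting $I[i,j)$ along this map gives the set of $t$ with $i\le p(t)<j$, which is $f(\sigma_i)\le t<f(\sigma_j)$. This set is the interval $[f(\sigma_i),f(\sigma_j))$, and it is empty exactly when $f(\sigma_i)=f(\sigma_j)$. Such pairs therefore contribute zero summands and are omitted from the barcode. Unpaired cells give $[f(\sigma_i),\infty)$.
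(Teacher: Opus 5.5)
Your proof is correct. It shares the paper's skeleton: the classes $[\rho_i]_p$, $i\in\mathcal A_q(p)$, form a basis at each stage, inclusion sends them to basis elements or to zero, and one then reindexes. The key steps, however, are argued differently.

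\emph{Basis property.} The paper works at the chain level. The active $\rho_i$ have pairwise distinct low indices, all lying in $\mathcal A_q(p)$. Every nonzero element of $B_q(K^p;\Bbbk)$ is a combination of the $R_\ell$ with $\ell\le p$, so its low index sits at a position already paired by stage $p$. Hence no nontrivial combination of active $\rho_i$ is a boundary, and the count $|\mathcal A_q(p)|=\dim H_q(K^p;\Bbbk)$ from Theorem~\ref{thm:active-basis} finishes. You instead pass to homology coordinates and show the family is triangular with respect to $\mathcal B_q(p)$. The strong induction you anticipate does go through, since the relation for an inactive $[z_\ell]_p$ involves only indices $k<\ell\le p$. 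The paper's route is shorter and needs no induction. Yours exhibits the explicit triangular change of basis between the selected and interval bases, which is closer in spirit to the transition coefficients studied later.

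\emph{Lifetimes.} The paper proves each lifetime separately: birth via Proposition~\ref{prop:interval-cycle-birth}, nonvanishing on $[i,j)$ via Proposition~\ref{prop:killed-cycle} plus functoriality, and a separate survival argument for unpaired classes. In your Step~2 all of this falls out of the stagewise basis bookkeeping, using only $[R_j]_j=0$.

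\emph{Reindexing.} Your Step~3 supplies what the paper merely asserts when it ``replaces a cell index by the filtration value.'' You give the initial-segment identity $K_t=K^{p(t)}$ and restrict each interval module along $t\mapsto p(t)$. This also explains cleanly why pairs with $f(\sigma_i)=f(\sigma_j)$ contribute nothing.
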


\begin{proof}
Suppose first that $i$ is paired and $\operatorname{pair}(i)=j$. By Proposition~\ref{prop:interval-cycle-birth}, the cycle
\[
\rho_i=R_j
\]
represents a class born at stage $i$.

By Proposition~\ref{prop:killed-cycle}, $[R_j]_{j-1}\neq 0$ and $[R_j]_j=0$. We claim that $[R_j]_p\neq0$ for every $i\leq p<j$. Suppose, to the contrary, that $[R_j]_p=0$ for some such $p$. The inclusion $K^p\hookrightarrow K^{j-1}$ carries the cellular cycle $R_j$ to the same cellular cycle $R_j$. Hence, the induced map on homology would send $[R_j]_p=0$ to $[R_j]_{j-1}=0$. This contradicts Proposition~\ref{prop:killed-cycle}. Hence, $[R_j]_p\neq0$ for every $i\leq p<j$. Since $R_j$ becomes a boundary when $\sigma_j$ enters, its homology class becomes zero at stage $j$. Therefore, its lifetime is $[i,j)$.

Now suppose that $i$ is an unpaired positive index. In this case, $\rho_i=z_i$. By Lemma~\ref{lem:selected-representative-cycle}, $z_i$ is a cycle. At stage $i$, Theorem~\ref{thm:active-basis} shows that $[z_i]_i$ is a new basis element of $H_q(K^i;\Bbbk)$, so it is born at stage $i$.

If $[z_i]$ became a boundary at some later stage $p$, then the boundary subspace at stage $p$ would contain a relation whose leading positive index is $i$. By the reduced matrix description in the proof of Theorem~\ref{thm:active-basis}, this would imply the existence of a negative column $R_j$ satisfying $\operatorname{low}(R_j)=i$ for some $j\leq p$. This contradicts the assumption that $i$ is unpaired. Hence, the class survives to the end of the filtration and determines the interval $[i,\infty)$.

It remains to verify that the interval cycles simultaneously give the interval decomposition. Fix a stage $p$. For every active positive index $i\in\mathcal A_q(p)$, the cycle $\rho_i$ has leading cell index $i$. If $i$ is paired, this follows from $\rho_i = R_{\operatorname{pair}(i)}$ and
\[
\operatorname{low}\bigl(R_{\operatorname{pair}(i)}\bigr) = i.
\]
If $i$ is unpaired, then
\[
\rho_i=z_i=V_i,
\]
and upper unitriangularity of $V$ implies that its leading cell index is $i$.

Hence, the active interval cycles have distinct leading indices. On the other hand, every nonzero boundary in $B_q(K^p;\Bbbk)$ has leading positive index among the indices already paired by stage $p$. These indices are disjoint from the active indices. Therefore, no nontrivial linear combination of the active $\rho_i$ can be a boundary, and the classes
\[
\{[\rho_i]_p\mid i\in\mathcal A_q(p)\}
\]
are linearly independent in $H_q(K^p;\Bbbk)$.

By Theorem~\ref{thm:active-basis}, $\lvert\mathcal A_q(p)\rvert = \dim_{\Bbbk}H_q(K^p;\Bbbk)$. Thus,
\[
\{[\rho_i]_p\mid i\in\mathcal A_q(p)\}
\]
is a basis of $H_q(K^p;\Bbbk)$.

The inclusion maps carry each $\rho_i$ to the class of the same cellular cycle for every stage during which the corresponding class is alive. For a paired index, the class persists from $i$ through $j-1$ and becomes zero at $j$; for an unpaired index, it persists indefinitely. Consequently, the persistence module decomposes as the direct sum of the interval modules determined by irs and the unpaired positive indices.

Finally, replacing a cell index by the filtration value of the corresponding cell sends $[i,j)$ to
\[
[f(\sigma_i),f(\sigma_j))
\]
and sends $[i,\infty)$ to
\[
[f(\sigma_i),\infty).
\]
If $f(\sigma_i)=f(\sigma_j)$, the resulting interval has zero length and exists only inside the auxiliary cellwise refinement.
\end{proof}

Theorems~\ref{thm:active-basis} and~\ref{thm:persistence pairing} provide two related but distinct families of cycle representatives.

The selected birth cycles $z_i=V_i$ are determined directly by the change-of-basis matrix produced by the chosen reduction. At every stage, the classes of the active selected birth cycles form a basis of homology. However, if $i$ is paired with $j$, the selected class $[z_i]$ need not itself become zero when $\sigma_j$ enters. It may instead become a linear combination of the remaining active homology classes.

The interval cycles $\rho_i$ are chosen for a different purpose. For a finite pair $(i,j)$, the choice $\rho_i=R_j$ represents a class born at $i$ that becomes a boundary exactly at $j$. Thus $\rho_i$ follows one interval generator through its entire lifetime. For an unpaired positive index, we set $\rho_i=z_i$.

The persistence pairing graph introduced below is constructed from the selected birth-cycle basis rather than from the interval-cycle basis, because the change-of-basis information contained in the columns of $V$ records additional relations among the selected representatives.

\section{Persistence Pairing Graphs}
\label{sec:persistence pairing-graphs}

Fix a homological dimension $q$. The purpose of this section is to record how the selected homology basis from Theorem~\ref{thm:active-basis} changes when one or more positive-length persistence intervals end at the same filtration value.

The basic linear algebraic phenomenon is illustrated as follows. Suppose $T:E\longrightarrow F$ is linear map. Let $(e_1,e_2,e_3)$ be a basis of $E$, and $(a,b)$ be a basis of $F$, with
\[
T(e_1)=a,
\qquad
T(e_2)=b,
\qquad\text{and}\qquad
T(e_3)=a+b.
\]
Although $e_3$ does not correspond to a basis element of $F$, its image is not zero. Instead, its coordinate vector in the basis $(a,b)$ is
\[
\begin{pmatrix}
1\\
1
\end{pmatrix}.
\]
Equivalently, $e_3-e_1-e_2\in\ker T$. 

The construction below records the support of analogous coordinate vectors for inclusion-induced maps in the filtration.

\subsection{Transition Coordinates}
\label{subsec:transition-coordinates}

Fix the reduction $R=DV$ and the resulting labeling of the nonzero $q$-th barcode intervals by their positive birth cells. Let $\mathcal V^{(q)}$ denote the resulting finite set of labeled intervals. For each $u\in\mathcal V^{(q)}$, write
\[
I_u=[b_u,d_u)
\]
for its persistence interval and let $i(u)$ denote its positive birth cell index. Thus, $b_u=f(\sigma_{i(u)})$. If $i(u)$ is paired with $j$, then $d_u=f(\sigma_j)$, whereas $d_u=\infty$ when $i(u)$ is unpaired. We denote the selected birth cycle associated with $u$ by
\[
z_u := z_{i(u)}.
\]

Only nonzero intervals of the original filtration are included in $\mathcal V^{(q)}$. Thus, a finite interval satisfies
\[
b_u<d_u,
\]
while an interval with $d_u=\infty$ is essential.

Let $t$ be a filtration value, and let
\[
i_t:K_{t^-}\hookrightarrow K_t
\]
denote the inclusion. We denote the set of $q$-th intervals that die at $t$ by
\[
\mathcal D_t^{(q)} := \left\{u\in\mathcal V^{(q)} \ \middle|\ b_u<t,\; d_u=t\right\},
\]
and denote the set of intervals that are present before $t$ and remain alive after $t$ by
\[
\mathcal S_t^{(q)} := \left\{v\in\mathcal V^{(q)} \ \middle|\ b_v<t<d_v \right\}.
\]
Hence, $\mathcal D_t^{(q)}$ consists of the intervals that disappear when passing from $K_{t^-}$ to $K_t$, whereas $\mathcal S_t^{(q)}$ consists of the intervals that cross the filtration value $t$.

Intervals satisfying
\[
b_u=d_u=t
\]
occur only in the auxiliary cellwise refinement determined by the tie-breaking order. They have zero length at the filtration value scale and are therefore not included in $\mathcal V^{(q)}$.

\begin{corollary}
\label{cor:survivor-image-basis}
The classes
\[
\left\{[z_v]_t \ \middle|\ v\in\mathcal S_t^{(q)} \right\}
\]
form a basis of $\operatorname{im}(i_t)_* \subseteq H_q(K_t;\Bbbk)$.
\end{corollary}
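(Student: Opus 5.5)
The plan is to move to the cellwise filtration and count dimensions. Let $p$ be the largest cell index with $f(\sigma_p)<t$ and $p'$ the largest with $f(\sigma_{p'})\leq t$. By the compatibility of $\prec$ with $f$, we have $K_{t^-}=K^p$ and $K_t=K^{p'}$, so $(i_t)_*$ is the map $H_q(K^p;\Bbbk)\to H_q(K^{p'};\Bbbk)$ induced by the cellwise inclusions. For a labelled interval $v$ with birth index $i=i(v)$:
\begin{itemize}
\item $b_v<t$ is equivalent to $i\leq p$;
\item $d_v>t$ is equivalent to $\operatorname{pair}(i)>p'$, with $\operatorname{pair}(i)=\infty$ allowed.
\end{itemize}
Hence $\mathcal S_t^{(q)}$ corresponds bijectively to the index set $S:=\{i \text{ positive},\ \dim\sigma_i=q,\ i\leq p<p'<\operatorname{pair}(i)\}$. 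Any such $i$ automatically gives a positive-length interval, since $f(\sigma_i)<t<f(\sigma_{\operatorname{pair}(i)})$, so it really is a label in $\mathcal V^{(q)}$. Conversely, zero-length intervals born at $t$ have $i>p$, so they never enter $S$.

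Next I show that the survivor classes lie in the image and are linearly independent. For $i\in S$, the cycle $z_i$ is supported in $K^i\subseteq K^p$, so $[z_i]_{p'}=(i_t)_*[z_i]_p$ lies in $\operatorname{im}(i_t)_*$. Moreover $S\subseteq\mathcal A_q(p')$. By Theorem~\ref{thm:active-basis}, the classes $[z_i]_{p'}$ for $i\in S$ are therefore part of a basis of $H_q(K^{p'};\Bbbk)$, and in particular they are linearly independent.

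It remains to show $\operatorname{rank}(i_t)_*=|S|$. This is the step needing care, because the selected birth classes $[z_i]_p$ of dying labels need not map to zero, so they cannot be used to compute the rank. Instead I use the interval-cycle basis from Theorem~\ref{thm:persistence pairing}: $\{[\rho_i]_p\mid i\in\mathcal A_q(p)\}$ is a basis of $H_q(K^p;\Bbbk)$. Since each $\rho_i$ is a fixed cellular cycle, $(i_t)_*[\rho_i]_p=[\rho_i]_{p'}$. There are two cases:
\begin{itemize}
\item If $\operatorname{pair}(i)\leq p'$, then $\rho_i=R_{\operatorname{pair}(i)}$ is a boundary in $K^{\operatorname{pair}(i)}\subseteq K^{p'}$ by Proposition~\ref{prop:killed-cycle}, so its image is $0$.
\item If $\operatorname{pair}(i)>p'$, then $i\in\mathcal A_q(p')$, and $[\rho_i]_{p'}$ belongs to the basis $\{[\rho_k]_{p'}\mid k\in\mathcal A_q(p')\}$ given by Theorem~\ref{thm:persistence pairing}.
\end{itemize}
The image of the domain basis thus consists of zeros together with distinct members of a basis of the target. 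Hence $\operatorname{rank}(i_t)_*=|\{i\in\mathcal A_q(p)\mid\operatorname{pair}(i)>p'\}|=|S|$.

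Finally, $\{[z_v]_t\mid v\in\mathcal S_t^{(q)}\}$ is a linearly independent subset of $\operatorname{im}(i_t)_*$ whose cardinality equals $\dim\operatorname{im}(i_t)_*$, so it is a basis.
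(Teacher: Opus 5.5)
Your proposal is correct and follows the paper's route: show that the survivor classes lie in $\operatorname{im}(i_t)_*$, obtain their independence from Theorem~\ref{thm:active-basis} at the stage $K_t$, and conclude by comparing cardinality with $\operatorname{rank}(i_t)_*=\lvert\mathcal S_t^{(q)}\rvert$. The difference is only in detail. The paper cites the interval decomposition for this rank, whereas you derive it by pushing the interval-cycle basis from $K^p=K_{t^-}$ to $K^{p'}=K_t$, and you also write out the translation between filtration values and cellwise indices. This makes explicit the identification of the reduction-labelled intervals with the intervals counted by the decomposition.
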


\begin{proof}
Let $v\in\mathcal S_t^{(q)}$. Since $b_v<t<d_v$, the selected cycle $z_v$ is already present in $K_{t^-}$ and its corresponding persistence interval remains alive after the transition at $t$. Hence,
\[
[z_v]_t = (i_t)_*[z_v]_{t^-}
\]
belongs to $\operatorname{im}(i_t)_*$.

After all cells with filtration value $t$ have been inserted, the interval indexed by $v$ is still active. Therefore, the classes
\[
\left\{[z_v]_t \ \middle|\ v\in\mathcal S_t^{(q)} \right\}
\]
are members of the selected active basis of $H_q(K_t;\Bbbk)$ from Theorem~\ref{thm:active-basis}, and are therefore linearly independent.

By the interval decomposition, the rank of the inclusion-induced map
\[
(i_t)_*: H_q(K_{t^-};\Bbbk) \longrightarrow H_q(K_t;\Bbbk)
\]
is the number of $q$-th intervals that are present on both sides of the transition. These are precisely the intervals in $\mathcal S_t^{(q)}$. Thus,
\[
\operatorname{rank}(i_t)_* = \left|\mathcal S_t^{(q)}\right|.
\]
The independent collection therefore has the same cardinality as $\operatorname{im}(i_t)_*$ and hence forms a basis of that image.
\end{proof}

For every $u\in\mathcal D_t^{(q)}$, the selected class $[z_u]_{t^-} \in H_q(K_{t^-};\Bbbk)$ is present immediately before $t$. Its image $(i_t)_*[z_u]_{t^-}$ belongs to $\operatorname{im}(i_t)_*$. By Corollary~\ref{cor:survivor-image-basis}, there are therefore unique coefficients $w_{uv}\in\Bbbk$, where $v\in\mathcal S_t^{(q)}$, such that
\begin{equation}
\label{eq:transition-relation}
(i_t)_*[z_u]_{t^-} = \sum_{v\in\mathcal S_t^{(q)}}w_{uv}[z_v]_t.
\end{equation}
We call the scalars $w_{uv}$ the \emph{transition coefficients at $t$}. Equivalently, the vector
\[
\bigl(w_{uv}\bigr)_{v\in\mathcal S_t^{(q)}}
\]
is the coordinate vector of $(i_t)_*[z_u]_{t^-}$ with respect to the survivor basis of $\operatorname{im}(i_t)_*$.

Notice that the disappearance of the interval label $u$ does not imply that the selected class $[z_u]$ maps to zero. Equation~\eqref{eq:transition-relation} records the combination of surviving selected classes to which it maps.

Immediately before $t$, the active $q$-th intervals are precisely those that either die at $t$ or survive beyond $t$. Hence, Theorem~\ref{thm:active-basis} gives the basis
\[
\left\{[z_u]_{t^-} \ \middle|\ u\in\mathcal D_t^{(q)} \right\} \cup \left\{[z_v]_{t^-} \ \middle|\ v\in\mathcal S_t^{(q)}\right\}
\]
of
$H_q(K_{t^-};\Bbbk)$.

\begin{proposition}
\label{prop:transition-kernel-basis}
For each $u\in\mathcal D_t^{(q)}$, let
\[
\kappa_u := [z_u]_{t^-} - \sum_{v\in\mathcal S_t^{(q)}}w_{uv}[z_v]_{t^-}.
\]
Then
\[
\left\{\kappa_u \ \middle|\ u\in\mathcal D_t^{(q)}\right\}
\]
is a basis of $\ker(i_t)_*$.
\end{proposition}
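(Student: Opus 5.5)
The plan is a dimension count combined with a direct verification that each $\kappa_u$ lies in the kernel. I would proceed in three steps: membership, linear independence, and spanning via rank--nullity.

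\emph{Membership.} Apply $(i_t)_*$ to $\kappa_u$. Since $z_v$ is a cellular cycle already present in $K_{t^-}$, we have $(i_t)_*[z_v]_{t^-}=[z_v]_t$. The defining relation~\eqref{eq:transition-relation} then gives
\[
(i_t)_*\kappa_u=\sum_{v\in\mathcal S_t^{(q)}}w_{uv}[z_v]_t-\sum_{v\in\mathcal S_t^{(q)}}w_{uv}[z_v]_t=0 .
\]

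\emph{Independence.} Use the basis of $H_q(K_{t^-};\Bbbk)$ recorded just before the proposition, namely the disjoint union of $\{[z_u]_{t^-}\mid u\in\mathcal D_t^{(q)}\}$ and $\{[z_v]_{t^-}\mid v\in\mathcal S_t^{(q)}\}$. In this basis the vectors $\kappa_u$ form a block matrix whose $\mathcal D_t^{(q)}$-block is the identity. Any relation $\sum_u c_u\kappa_u=0$ therefore forces every $c_u=0$, by reading off the $[z_u]_{t^-}$ coordinates.

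\emph{Spanning.} Rank--nullity gives $\dim\ker(i_t)_*=\dim H_q(K_{t^-};\Bbbk)-\operatorname{rank}(i_t)_*$. The first term equals $|\mathcal D_t^{(q)}|+|\mathcal S_t^{(q)}|$ by the displayed basis. The second equals $|\mathcal S_t^{(q)}|$ by Corollary~\ref{cor:survivor-image-basis}. The kernel therefore has dimension $|\mathcal D_t^{(q)}|$, and the independent family $\{\kappa_u\}$ of that size must be a basis.

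As a sanity check, one can also argue spanning directly. Write a kernel element $x=\sum_u a_u[z_u]_{t^-}+\sum_v b_v[z_v]_{t^-}$. Then $x-\sum_u a_u\kappa_u$ is a combination of survivor classes lying in the kernel. Its image is $\sum_v(\cdot)[z_v]_t$, and by the independence part of Corollary~\ref{cor:survivor-image-basis} all these coefficients vanish, so that remainder is zero. This route avoids the dimension count.

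\textbf{Main obstacle.} The delicate point is justifying that the displayed set is really a basis of $H_q(K_{t^-};\Bbbk)$ in terms of the original filtration labels. Theorem~\ref{thm:active-basis} is stated for cellwise stages. Here $K_{t^-}=K^p$ with $p$ the index of the last cell of value $<t$. Active indices at stage $p$ correspond to intervals with $b<t$ and $d\ge t$, which are exactly $\mathcal D_t^{(q)}\cup\mathcal S_t^{(q)}$. Zero-length intervals have $b=d<t$, so they are already dead at stage $p$ and cannot appear.

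I would make this identification explicit and then rely on the paper's remark preceding the proposition.
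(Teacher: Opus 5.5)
Your proposal is correct and follows the paper's proof essentially step for step: you show membership via the transition relation~\eqref{eq:transition-relation}, get independence by reading off the $[z_u]_{t^-}$-coordinates in the active basis of $H_q(K_{t^-};\Bbbk)$, and get spanning by a dimension count. The differences are cosmetic: you obtain $\dim\ker(i_t)_*=|\mathcal D_t^{(q)}|$ by rank--nullity from Corollary~\ref{cor:survivor-image-basis} rather than reading it directly off the interval decomposition, and you spell out the identification of the active indices at $K_{t^-}$ with $\mathcal D_t^{(q)}\cup\mathcal S_t^{(q)}$, which the paper asserts without proof.
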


\begin{proof}
For every $v\in\mathcal S_t^{(q)}$, inclusion sends the class of the same selected cycle in $K_{t^-}$ to its class in $K_t$. That is, $(i_t)_*[z_v]_{t^-} = [z_v]_t$. Therefore, using~\eqref{eq:transition-relation},
\[
\begin{aligned}
(i_t)_*\kappa_u &= (i_t)_*[z_u]_{t^-} - \sum_{v\in\mathcal S_t^{(q)}}w_{uv}(i_t)_*[z_v]_{t^-}\\
&= \sum_{v\in\mathcal S_t^{(q)}}w_{uv}[z_v]_t - \sum_{v\in\mathcal S_t^{(q)}}w_{uv}[z_v]_t\\
&=0.
\end{aligned}
\]
Thus, $\kappa_u\in\ker(i_t)_*$ for every $u\in\mathcal D_t^{(q)}$.

Now, we prove linear independence. Suppose
\[
\sum_{u\in\mathcal D_t^{(q)}}a_u\kappa_u = 0.
\]
Expanding the definition of $\kappa_u$ gives
\[
\sum_{u\in\mathcal D_t^{(q)}}a_u[z_u]_{t^-} - \sum_{u\in\mathcal D_t^{(q)}}\sum_{v\in\mathcal S_t^{(q)}}a_uw_{uv}[z_v]_{t^-} = 0.
\]
Relative to the selected active basis of $H_q(K_{t^-};\Bbbk)$ displayed above, the coefficient of the distinct basis vector $[z_u]_{t^-}$ is exactly $a_u$. Hence, $a_u=0$ for every $u\in\mathcal D_t^{(q)}$. Therefore, the family $\{\kappa_u\}$ is linearly independent.

Finally, the interval decomposition shows that the kernel of $(i_t)_*: H_q(K_{t^-};\Bbbk) \longrightarrow H_q(K_t;\Bbbk)$ has one dimension for each $q$-th interval that ends at $t$. Consequently,
\[
\dim_{\Bbbk}\ker(i_t)_* = \left|\mathcal D_t^{(q)}\right|.
\]
The family
\[
\left\{\kappa_u \ \middle|\ u\in\mathcal D_t^{(q)} \right\}
\]
is an independent subset of the kernel with exactly this cardinality, so it is a basis of $\ker(i_t)_*$.
\end{proof}

Proposition~\ref{prop:transition-kernel-basis} makes the distinction between a dying persistence label and the selected cycle carrying that label precise. The kernel vector
\[
\kappa_u = [z_u]_{t^-} - \sum_{v\in\mathcal S_t^{(q)}}w_{uv}[z_v]_{t^-}
\]
is genuinely killed by the inclusion-induced map. In other words, $(i_t)_*\kappa_u=0$. The selected class $[z_u]_{t^-}$ itself need not lie in the kernel.

If $w_{uv}=0$ for every $v\in\mathcal S_t^{(q)}$, then $\kappa_u=[z_u]_{t^-}$, so the selected birth class itself becomes zero in homology at the transition. On the other hand, if $w_{uv}\neq0$ for at least one survivor $v$, then
\[
(i_t)_*[z_u]_{t^-} = \sum_{v\in\mathcal S_t^{(q)}} w_{uv}[z_v]_t \neq0,
\]
because the survivor classes are linearly independent. In this case, the label $u$ disappears from the selected active basis, but the image of its selected birth class remains nonzero and is re-expressed in the survivor basis.

The preceding construction can be summarized by the short exact sequence
\[
0 \longrightarrow \ker(i_t)_* \longrightarrow H_q(K_{t^-};\Bbbk) \xrightarrow{(i_t)_*} \operatorname{im}(i_t)_* \longrightarrow 0.
\]
Here, being exact means that the first map identifies $\ker(i_t)_*$ with the classes sent to zero by $(i_t)_*$, while the last map is onto $\operatorname{im}(i_t)_*$ by definition.

Immediately before $t$, the selected active basis is
\[
\left\{[z_u]_{t^-}\mid u\in\mathcal D_t^{(q)}\right\} \cup \left\{[z_v]_{t^-}\mid v\in\mathcal S_t^{(q)}\right\}.
\]
Proposition~\ref{prop:transition-kernel-basis} replaces the first part of this basis by the kernel basis $\left\{\kappa_u\mid u\in\mathcal D_t^{(q)}\right\}$, where
\[
\kappa_u = [z_u]_{t^-} - \sum_{v\in\mathcal S_t^{(q)}}w_{uv}[z_v]_{t^-}.
\]
Thus, the matrix of transition coefficients $W_t=(w_{uv})$ records the difference between the reduction-selected active basis and a basis adapted to the kernel--image decomposition of the inclusion map. This is the linear algebraic information later recorded by a persistence pairing graph.

\subsection{Transition Coefficients via the Kronecker Pairing}
\label{subsec:kronecker-coordinates}

The transition coefficients can be recovered from the Kronecker pairing. For brevity, we write
\[
\langle\cdot,\cdot\rangle_t
\]
for the Kronecker pairing on $K_t$.

By Corollary~\ref{cor:survivor-image-basis}, the classes
\[
\left\{[z_v]_t \ \middle|\ v\in\mathcal S_t^{(q)} \right\}
\]
form a basis of $\operatorname{im}(i_t)_*$. Extending this family to a basis, we have $\mathcal B_t$ of $H_q(K_t;\Bbbk)$. Let $\mathcal B_t^*$ be the dual basis of $H^q(K_t;\Bbbk)$ under the Kronecker pairing. For each $v\in\mathcal S_t^{(q)}$, let $\alpha_v^t\in H^q(K_t;\Bbbk)$ be the dual basis element corresponding to $[z_v]_t$. Thus,
\[
\left\langle\alpha_v^t, [z_r]_t \right\rangle_t = \delta_{vr}
\]
for all $v,r\in\mathcal S_t^{(q)}$.

\begin{proposition}
\label{prop:kronecker-coordinate-formula}
For every $u\in\mathcal D_t^{(q)}$ and $v\in\mathcal S_t^{(q)}$, the transition coefficient $w_{uv}$ satisfies
\begin{equation}
\label{eq:kronecker-coordinate}
w_{uv} = \left\langle\alpha_v^t, (i_t)_*[z_u]_{t^-}\right\rangle_t = \left\langle i_t^*\alpha_v^t, [z_u]_{t^-} \right\rangle_{t^-}.
\end{equation}
\end{proposition}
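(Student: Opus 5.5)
The plan is to apply the dual-basis element $\alpha_v^t$ to both sides of the defining relation~\eqref{eq:transition-relation}, and then use Lemma~\ref{lem:kronecker-naturality} to move the pairing back to $K_{t^-}$. No new topological input is needed beyond the bilinearity of the Kronecker pairing, the duality relations, and naturality.

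\emph{First equality.} Fix $u\in\mathcal D_t^{(q)}$ and $v\in\mathcal S_t^{(q)}$. By Corollary~\ref{cor:survivor-image-basis}, the class $(i_t)_*[z_u]_{t^-}$ lies in $\operatorname{im}(i_t)_*$, and its unique expansion in the survivor basis is~\eqref{eq:transition-relation}. Pairing with $\alpha_v^t$ and using linearity of $\langle\alpha_v^t,\cdot\rangle_t$ in the homology argument gives
\[
\left\langle\alpha_v^t,(i_t)_*[z_u]_{t^-}\right\rangle_t=\sum_{r\in\mathcal S_t^{(q)}}w_{ur}\left\langle\alpha_v^t,[z_r]_t\right\rangle_t=\sum_{r\in\mathcal S_t^{(q)}}w_{ur}\delta_{vr}=w_{uv}.
\]
One point needs stating explicitly. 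The relations $\langle\alpha_v^t,[z_r]_t\rangle_t=\delta_{vr}$ hold because $\mathcal B_t^*$ is the dual basis of the extended basis $\mathcal B_t$, and each $[z_r]_t$ with $r\in\mathcal S_t^{(q)}$ is an element of $\mathcal B_t$. The additional basis vectors used in the extension do not matter, since the expansion of $(i_t)_*[z_u]_{t^-}$ involves only survivor classes.

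\emph{Second equality.} Apply Lemma~\ref{lem:kronecker-naturality} with $L=K_{t^-}$, $K=K_t$, $i=i_t$, $\alpha=\alpha_v^t$ and $x=[z_u]_{t^-}$. The lemma requires only a subcomplex inclusion. $K_{t^-}$ is a subcomplex of $K_t$ because it is a union of closed cells with $f<t$, and face-monotonicity~\eqref{eq:face-monotone} guarantees it is closed under taking faces.

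\emph{Main obstacle.} The only place needing care is the well-definedness of $\alpha_v^t$ and of the coefficients. If a survivor class appeared with nonzero coefficient on some extension vector, the duality argument would break. This cannot happen: the extension is a completion of a basis of $\operatorname{im}(i_t)_*$, and the image class already lies in the span of the survivor classes. I would also remark that $w_{uv}$ does not depend on the chosen extension, since it is determined by uniqueness of coordinates in~\eqref{eq:transition-relation}. The pairing value may depend on the extension in general, but it does not on elements of the image.
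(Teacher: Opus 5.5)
Your proposal is correct and matches the paper's proof essentially step for step. Like the paper, you pair both sides of \eqref{eq:transition-relation} with $\alpha_v^t$, use bilinearity and $\langle\alpha_v^t,[z_r]_t\rangle_t=\delta_{vr}$ to isolate $w_{uv}$, and obtain the second equality from Lemma~\ref{lem:kronecker-naturality}; your extra checks (that $K_{t^-}$ is a subcomplex of $K_t$, and that the choice of extension of the survivor basis does not matter) agree with the remark the paper makes right after its proof.
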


\begin{proof}
By~\eqref{eq:transition-relation},
\[
(i_t)_*[z_u]_{t^-} = \sum_{r\in\mathcal S_t^{(q)}}w_{ur}[z_r]_t.
\]
Pairing both sides with $\alpha_v^t$ gives
\[
\begin{aligned}
\left\langle \alpha_v^t, (i_t)_*[z_u]_{t^-} \right\rangle_t &= \sum_{r\in\mathcal S_t^{(q)}} w_{ur} \left\langle \alpha_v^t, [z_r]_t \right\rangle_t\\
&= \sum_{r\in\mathcal S_t^{(q)}} w_{ur}\delta_{vr}\\
&= w_{uv}.
\end{aligned}
\]
The second equality in~\eqref{eq:kronecker-coordinate} follows from Lemma~\ref{lem:kronecker-naturality}.
\end{proof}

Although the construction of $\alpha_v^t$ uses an extension of the survivor basis to a basis of the whole homology space, the resulting number $w_{uv}$ is independent of the chosen extension. Indeed, the class $(i_t)_*[z_u]_{t^-}$ belongs to $\operatorname{im}(i_t)_*$, so only the restriction of $\alpha_v^t$ to this image is used in Equation~\eqref{eq:kronecker-coordinate}. This restriction is uniquely determined by
\[
\left\langle\alpha_v^t,[z_r]_t\right\rangle_t = \delta_{vr}
\]
for all $r\in\mathcal S_t^{(q)}$. The coefficients $w_{uv}$ can nevertheless depend on the chosen survivor basis and therefore on the selected representatives produced by the fixed reduction.

Let $\mathcal R=(R,V)$ denote the chosen reduced factorization, with the cell orientations fixed throughout. We regard the data used in the construction as
\[
\mathfrak F = (K,f,\prec,\Bbbk,\mathcal R).
\]
It is useful to distinguish the information determined by the filtered complex from the information introduced by auxiliary choices:
\begin{itemize}
\item The persistence module and its barcode, up to isomorphism, are determined by $(K,f,\Bbbk)$.
\item For every filtration value $t$, the subspaces $\operatorname{im}(i_t)_*$ and $\ker(i_t)_*$ are determined intrinsically by the inclusion
\[
i_t:K_{t^-}\hookrightarrow K_t.
\]
\item When several cells have the same filtration value, their cell-level birth and death labels in the auxiliary cellwise filtration can depend on the tie-breaking order $\prec$.
\item The selected birth cycles $z_u$ depend on the chosen reduction. Consequently, the survivor basis, the transition coefficients $w_{uv}$, and any graph defined from the support of these coefficients can depend on the reduced factorization $\mathcal R$ and, in the presence of ties, on $\prec$.
\end{itemize}

The transition coefficients now define a directed graph on the labeled persistence intervals.

\begin{definition}
\label{def:persistence pairing-graph}
The $q$-th \emph{persistence pairing graph} associated with $\mathfrak F$ is the directed graph
\[
\mathcal G^{(q)}(\mathfrak F) = \bigl(\mathcal V^{(q)},\mathcal E^{(q)}\bigr),
\]
whose vertex set $\mathcal V^{(q)}$ consists of the labeled nonzero $q$-th persistence intervals and whose edge set is
\[
\mathcal E^{(q)} := \left\{(u,v) \ \middle|\ d_u<\infty, b_v<d_u<d_v, \text{ and } w_{uv}\neq 0\right\}.
\]
If the nonzero transition coefficients are retained, we define the edge weight
\[
\omega^{(q)}(u,v) := w_{uv} \in \Bbbk\setminus\{0\}.
\]
\end{definition}

An edge $u\longrightarrow v$ is present precisely when the selected birth class associated with $u$ has a nonzero coordinate in the selected survivor direction associated with $v$ when $u$ ceases to be active.

The graph records a change of coordinates in the selected homology basis. In particular, an edge $u\longrightarrow v$ does not mean that the persistence interval indexed by $u$ survives beyond $d_u$. By Theorem~\ref{thm:persistence pairing}, the interval cycle $\rho_{i(u)}$ becomes trivial in homology at $d_u$, as required by the interval decomposition. Rather, the edge records that the selected birth cycle $z_u$ has a nonzero image in the survivor direction labeled by $v$.

\begin{proposition}
\label{prop:pairing-graph-dag}
A persistence pairing graph $\mathcal G^{(q)}(\mathfrak F)$ is a directed acyclic graph.
\end{proposition}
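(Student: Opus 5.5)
The plan is to exhibit a strictly monotone potential on vertices along edges, namely the death value. By Definition~\ref{def:persistence pairing-graph}, every edge $(u,v)\in\mathcal E^{(q)}$ satisfies $d_u<\infty$ and $d_u<d_v$. So I would use the map $u\mapsto d_u\in\mathbb R\cup\{\infty\}$, with the usual total order in which $\infty$ is larger than every real number, and check that it strictly increases along every edge. The conclusion then follows from the standard fact that a directed graph admitting a function strictly increasing along its edges has no directed cycles.

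The key steps, in order, are as follows. First, I would note that $\mathcal V^{(q)}$ is finite, so the graph is a finite directed graph. There are no self-loops, since an edge $(u,u)$ would require $d_u<d_u$. Second, I would suppose, for contradiction, that there is a directed cycle $u_0\to u_1\to\cdots\to u_k=u_0$ with $k\ge1$. Applying the edge condition to each edge gives
\[
d_{u_0}<d_{u_1}<\cdots<d_{u_k}=d_{u_0}.
\]
Every source vertex on the cycle has finite death, because $d_{u_\ell}<\infty$ is part of the edge condition. Every vertex on a cycle is a source of some edge, so all the $d_{u_\ell}$ in this chain are finite real numbers, and the strict inequality $d_{u_0}<d_{u_0}$ is a contradiction. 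Third, I would add the structural remark that is also implicit in the condition $b_v<d_u<d_v$: the target interval starts before and ends after the death of the source. This is consistent with Corollary~\ref{cor:survivor-image-basis}, since the targets lie in $\mathcal S_{d_u}^{(q)}$. It also gives a topological order: sort the vertices by increasing death, breaking ties arbitrarily.

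I expect no real obstacle; the only point needing care is that the edge set is defined with the strict inequality $d_u<d_v$ built in, rather than merely $v\in\mathcal S_{d_u}^{(q)}$. Even in that alternative reading, the survivor condition $d_u<d_v$ holds by definition of $\mathcal S_t^{(q)}$ with $t=d_u$, so the argument is unchanged.
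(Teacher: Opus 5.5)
Your proposal is correct and follows the same argument as the paper: the edge condition $d_u<d_v$ makes the death value strictly increasing along every directed path, so a directed cycle would force $d_{u_0}<d_{u_0}$. Your remark that every vertex on a cycle is the source of some edge, and therefore has finite death, is a small extra piece of care that the paper's proof leaves implicit.
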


\begin{proof}
If $u\longrightarrow v$, then, by definition, $d_u<d_v$. Hence, death values strictly increase along every directed path. A directed cycle
\[
u_1 \longrightarrow u_2 \longrightarrow \cdots \longrightarrow u_r \longrightarrow u_1
\]
would therefore imply
\[
d_{u_1} < d_{u_2} < \cdots < d_{u_r} < d_{u_1},
\]
which leads to a contradiction.
\end{proof}

The preceding proposition uses only the ordering of death values. The triangular structure of the selected cycles gives a stronger restriction on the possible edges. In fact, an edge can point only toward an interval whose birth occurs no later than the birth of the source interval and whose death occurs strictly later. Thus, directed edges respect a nesting relation among persistence intervals.

\begin{proposition}
\label{prop:edge-interval-containment}
If $u\longrightarrow v$ is an edge of $\mathcal G^{(q)}(\mathfrak F)$, then $i(v)<i(u)$. Consequently,
\[
b_v\leq b_u<d_u<d_v.
\]
In particular, $I_u\subseteq I_v$.
\end{proposition}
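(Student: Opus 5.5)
The plan is to compute the coordinates of $(i_t)_*[z_u]_{t^-}$ directly at the cellwise stage corresponding to $K_t$, using the triangular boundary relations from the proof of Theorem~\ref{thm:active-basis}.

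\textbf{Setup.} Let $t=d_u<\infty$, and let $p$ be the largest cell index with $f(\sigma_p)\le t$. Then $K_t=K^p$, and $(i_t)_*[z_u]_{t^-}=[z_u]_p$. Let $j=\operatorname{pair}(i(u))$. Since $f(\sigma_j)=d_u=t$, we have $j\le p$, so $i(u)$ is no longer active at stage $p$. Every survivor $v\in\mathcal S_t^{(q)}$ has $d_v>t$, hence $\operatorname{pair}(i(v))>p$, so $i(v)\in\mathcal A_q(p)$. Thus the survivor classes are part of the active basis $\mathcal B_q(p)$ of Theorem~\ref{thm:active-basis}. By Corollary~\ref{cor:survivor-image-basis}, $[z_u]_p$ lies in their span. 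Uniqueness of coordinates in $\mathcal B_q(p)$ then shows that the $w_{uv}$ are exactly the coordinates of $[z_u]_p$ in $\mathcal B_q(p)$ along the survivor directions, and all other active coordinates vanish. So it suffices to prove the following claim: the expansion of $[z_u]_p$ in $\mathcal B_q(p)$ involves only active indices $k<i(u)$.

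\textbf{Proof of the claim.} I would prove, by strong induction on $m$, that for every positive $q$-index $m\le p$, the class $[z_m]_p$ lies in the span of $\{[z_k]_p : k\in\mathcal A_q(p),\ k\le m\}$. Moreover, if $m$ is already paired by stage $p$, the same holds with $k<m$.
\begin{itemize}
\item If $m$ is active, the statement is trivial.
\item If $\operatorname{pair}(m)=j'\le p$, the proof of Theorem~\ref{thm:active-basis} gives the relation
\[
[z_m]_p=-\lambda_m^{-1}\sum_{k<m,\,R_k=0}\lambda_k[z_k]_p .
\]
Each $[z_k]_p$ on the right has $k<m$, so the induction hypothesis expands it using active indices $\le k<m$.
\end{itemize}
Applying this with $m=i(u)$, which is paired by stage $p$, yields the claim. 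Hence $w_{uv}\ne0$ forces $i(v)<i(u)$.

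\textbf{Consequences.} Since the order $\prec$ is filtration-compatible, $i(v)<i(u)$ gives $b_v=f(\sigma_{i(v)})\le f(\sigma_{i(u)})=b_u$. The inequality $b_u<d_u$ holds because $u$ is a nonzero interval, and $d_u<d_v$ is part of the edge definition. Together these give $b_v\le b_u<d_u<d_v$, and hence $I_u\subseteq I_v$.

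\textbf{Main obstacle.} The delicate step is the identification of $w_{uv}$ with coordinates in the full stagewise basis $\mathcal B_q(p)$. This needs care because several cells may share the value $t$: some positive cells born at $t$ are active at stage $p$ but are not survivors. The uniqueness argument above handles this, since $[z_u]_p$ lies in the survivor span and therefore has zero coordinate on such classes. I would state this point explicitly rather than leave it implicit.
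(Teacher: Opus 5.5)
Your proof is correct, and it reaches $i(v)<i(u)$ by a different mechanism than the paper.

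The paper argues at the chain level by contradiction. It writes $z_u-\sum_r w_{ur}z_r$ as a boundary in $K_t$. A survivor $z_v$ with $i(v)>i(u)$ would contribute a leading term at index $i(v)$. No boundary can cancel that term, since boundaries have already-paired leading indices. This contradicts the fact that $z_u$ is supported on indices at most $i(u)$.

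You instead stay in homology. You identify $K_t=K^p$ and read the $w_{uv}$ as the coordinates of $[z_u]_p$ in the full stagewise basis $\mathcal B_q(p)$. You then run a strong induction on the triangular relations $[z_m]_p=-\lambda_m^{-1}\sum_{k<m}\lambda_k[z_k]_p$ from the proof of Theorem~\ref{thm:active-basis}.

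What each route buys:
\begin{itemize}
\item The paper's route is shorter. However, its non-cancellation step is airtight only if $v$ is taken to be the survivor with the \emph{largest} index among those with $w_{uv}\neq0$; otherwise a larger-index survivor could cancel the term at $i(v)$. The paper leaves this choice implicit.
\item Your route needs no such choice.
\item It proves the slightly stronger fact that the whole $\mathcal B_q(p)$-expansion of $[z_{i(u)}]_p$ lives on active indices below $i(u)$.
\item It explicitly handles the classes born at $t$, which are active at stage $p$ but are not survivors.
\end{itemize}

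The final deduction of $b_v\le b_u<d_u<d_v$ is the same in both.
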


\begin{proof}
Let $i=i(u)$ and $k=i(v)$. Since $u\longrightarrow v$, the transition coefficient $w_{uv}$ is nonzero. At the death value $t=d_u$, we have
\[
(i_t)_*[z_u]_{t^-} = \sum_{r\in\mathcal S_t^{(q)}}w_{ur}[z_r]_t.
\]
The selected cycle $z_u=V_i$ is supported only on cells with index at most $i$, because $V$ is upper triangular.

Suppose that $w_{uv}\neq0$ and $k>i$. The selected cycle $z_v=V_k$ has leading cell index $k$. Since $v$ survives beyond $t$, its positive index $k$ has not yet been paired by the end of the transition at $t$. Therefore, no boundary present in $K_t$ has leading positive index $k$. The nonzero leading $k$-term contributed by $w_{uv}z_v$ therefore cannot be cancelled by a boundary or by selected survivor cycles with smaller leading index. This contradicts the fact that the left-hand side is represented by a chain supported on indices at most $i$.

Hence $k<i$. Since the cell order refines the filtration,
\[
b_v=f(\sigma_k)\leq f(\sigma_i)=b_u.
\]
Because $u$ is a nonzero interval dying at $d_u$, $b_u<d_u$. Finally, the definition of an edge gives $d_u<d_v$. Combining these inequalities yields
\[
b_v\leq b_u<d_u<d_v,
\]
and therefore $I_u\subseteq I_v$.
\end{proof}

Propositions~\ref{prop:pairing-graph-dag} and~\ref{prop:edge-interval-containment} describe restrictions that hold once the reduction data have been fixed. They show that the graph has an ordered structure: edges point toward classes with longer persistence intervals that contain the source interval. These statements should not, however, be read as claims of geometric containment between cycle representatives.

For $q>0$, an edge in a persistence pairing graph is an algebraic basis transition relation and need not represent a geometric merger of subsets of $K$. Homological dimension zero is special because it admits a natural basis indexed by connected components. In that case, the transition relations have a more direct interpretation in terms of component mergers.

The preceding results concern the structure of the graph for a fixed reduction. We next examine a different question: which parts of the graph are determined by the persistence module itself, and which parts depend on the basis selected by the reduction? The barcode remains unchanged under many changes of basis, but the selected birth cycles need not remain unchanged. The next result gives the simplest such change explicitly.

\subsection{Dependence on the Chosen Reduction}
\label{subsec:reduction-dependence}

The persistence module and its barcode are determined by the filtered complex, but the selected birth cycles used in a persistence pairing graph depend on the chosen reduction. Consequently, the transition coefficients and graph edges need not be determined by the barcode alone. The following results make this dependence explicit, first through a local change of one selected birth cycle and then through a choice of representatives for which all finite transition coefficients vanish.

\begin{proposition}
\label{prop:birth-cycle-change}
Let $k<i$ be positive cell indices of the same dimension such that $R_k=R_i=0$. For any $a\in\Bbbk$, let $V'$ be a matrix given by
\[
V'_i=V_i+aV_k
\]
and, for $\ell\neq i$, $V'_\ell=V_\ell$. Then $V'$ is upper unitriangular and dimension preserving, and
\[
DV'=R.
\]
Thus, $(R,V')$ has exactly the same reduced matrix and the same persistence pairs as $(R,V)$.

Suppose further that the interval $u$ has birth index $i$, dies at $t$, and that the interval $v$ with birth index $k$ survives beyond $t$. If $w_{ur}$ and $w'_{ur}$ denote the transition coefficients before and after the change, then
\[
w'_{uv}=w_{uv}+a
\]
and, for $r\neq v$, $w'_{ur}=w_{ur}$.
\end{proposition}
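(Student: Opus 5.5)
The proof splits into two parts: first show that $(R,V')$ is still a valid reduced factorization, then track how the selected birth cycles, and hence the transition coefficients, change.

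\emph{Validity of $(R,V')$.} Since $k<i$, the column $V_k$ is supported on indices at most $k<i$. Hence $V'_i=V_i+aV_k$ still has entry $1$ in row $i$ and zeros below row $i$, so $V'$ is upper unitriangular. The column $V_k$ is supported on cells of dimension $\dim\sigma_k=\dim\sigma_i$, so $V'$ remains dimension preserving. For the product, $DV'_i=DV_i+aDV_k=R_i+aR_k=0=R_i$, and every other column is unchanged. Thus $DV'=R$. The persistence pairs are read off from the low indices of $R$ (Definition~\ref{def:reduction-pair}), so they are unchanged. The same holds for the positive cells, the active sets $\mathcal A_q(p)$, the labeled intervals $\mathcal V^{(q)}$, and the sets $\mathcal D_t^{(q)}$ and $\mathcal S_t^{(q)}$.

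\emph{Effect on the selected cycles.} The only selected birth cycle that changes is $z'_u=z_u+az_v$; all other $z'_r=z_r$. In particular the survivor basis $\{[z_r]_t\mid r\in\mathcal S_t^{(q)}\}$ of $\operatorname{im}(i_t)_*$ is literally the same, and Corollary~\ref{cor:survivor-image-basis} applies unchanged. The cycle $z_v$ lies in $K^k\subseteq K_{t^-}$ because $b_v<t$. Applying $(i_t)_*$ and using \eqref{eq:transition-relation} gives
\[
(i_t)_*[z'_u]_{t^-}=(i_t)_*[z_u]_{t^-}+a[z_v]_t=\sum_{r\in\mathcal S_t^{(q)}}w_{ur}[z_r]_t+a[z_v]_t .
\]
By uniqueness of coordinates in the survivor basis, $w'_{uv}=w_{uv}+a$ and $w'_{ur}=w_{ur}$ for $r\neq v$. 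Equivalently, one may pair with the dual classes $\alpha^t_r$ as in Proposition~\ref{prop:kronecker-coordinate-formula}, which give the same basis for both reductions.

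\emph{Main point of care.} The delicate step is confirming that the survivor basis and the index sets are genuinely identical for the two reductions, so that the coordinates are taken with respect to the same basis. This follows because pairs and labels depend only on $R$, and because $V'_r=V_r$ for every $r\neq i$, where $i=i(u)$ is a dying label and not a survivor. The positivity of $k$ is what makes $R_k=0$, so that $DV'=R$ holds.
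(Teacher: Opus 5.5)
Your proposal is correct and follows essentially the same route as the paper. You check upper unitriangularity, dimension preservation and $DV'=R$ column by column, then use $z'_u=z_u+az_v$ and $(i_t)_*[z_v]_{t^-}=[z_v]_t$ to read off the new coefficients by uniqueness of coordinates in the survivor basis; your explicit check that the survivor basis and the sets $\mathcal D_t^{(q)},\mathcal S_t^{(q)}$ are literally unchanged, because only the column of the dying label $i$ is modified, is left implicit in the paper.
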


\begin{proof}
Because $k<i$, adding $aV_k$ to $V_i$ preserves upper unitriangularity. Since $k$ and $i$ have the same dimension, the change also preserves dimension.

Moreover,
\[
DV'_i = D(V_i+aV_k) = R_i+aR_k = 0,
\]
while every other column is unchanged. Hence, $DV'=R$. In particular, the reduced matrix, its low indices, and all persistence pairs are unchanged.

The new selected birth cycle for $u$ is
\[
z'_u=z_u+az_v.
\]
Since $v$ survives across $t$, $(i_t)_*[z_v]_{t^-}=[z_v]_t$. Therefore,
\[
\begin{aligned}
(i_t)_*[z'_u]_{t^-} &= (i_t)_*[z_u]_{t^-} + a(i_t)_*[z_v]_{t^-}\\
&= \sum_{r\in\mathcal S_t^{(q)}}w_{ur}[z_r]_t + a[z_v]_t.
\end{aligned}
\]
Uniqueness of coordinates in the survivor basis gives
\[
w'_{uv}=w_{uv}+a
\]
and leaves all other transition coefficients unchanged.
\end{proof}

The proposition shows that a simple change of one selected birth cycle can alter one transition coefficient without changing the reduced matrix or any persistence pair. Thus, the same barcode data can support different directed edge relations. The following consequence makes this dependence explicit at the level of the graph.

\begin{corollary}
\label{cor:edge-reduction-dependence}
Under the assumptions of Proposition~\ref{prop:birth-cycle-change}, the support of the transition coefficients can change while the reduced matrix, persistence pairs, and barcode remain unchanged.

In particular, if $w_{uv}\neq0$, then choosing
\[
a=-w_{uv}
\]
removes the edge $u\longrightarrow v$. If $w_{uv}=0$, then choosing any nonzero $a$ creates that edge.
\end{corollary}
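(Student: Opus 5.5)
This corollary follows from Proposition~\ref{prop:birth-cycle-change}, so the plan is to read off its two conclusions and then interpret them at the level of the graph in Definition~\ref{def:persistence pairing-graph}.

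\textbf{Invariance of the barcode data.} Proposition~\ref{prop:birth-cycle-change} gives $DV'=R$ with $V'$ upper unitriangular and dimension preserving. Hence $(R,V')$ is again a reduced factorization. The low indices of $R$ are unchanged, so the persistence pairs are unchanged. By Theorem~\ref{thm:persistence pairing}, the labeled intervals $[f(\sigma_i),f(\sigma_j))$ and the unpaired intervals are also unchanged. It follows that the vertex set $\mathcal V^{(q)}$ and the sets $\mathcal D_t^{(q)}$ and $\mathcal S_t^{(q)}$ are identical for $\mathfrak F$ and for the modified data $\mathfrak F'$. Consequently, the only ingredient of the edge set $\mathcal E^{(q)}$ that can differ is the nonvanishing of the transition coefficients.

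\textbf{The edge conditions.} Proposition~\ref{prop:birth-cycle-change} also gives $w'_{uv}=w_{uv}+a$, with every other coefficient of $u$ unchanged. I would check that the interval conditions in the edge definition hold for the pair $(u,v)$ independently of $a$. Since $u$ dies at $t$, we have $d_u=t<\infty$. Since $v\in\mathcal S_t^{(q)}$, we have $b_v<t<d_v$, so $b_v<d_u<d_v$. Therefore $(u,v)\in\mathcal E^{(q)}$ exactly when the relevant coefficient is nonzero. If $w_{uv}\neq0$, setting $a=-w_{uv}$ gives $w'_{uv}=0$, so the edge is removed. If $w_{uv}=0$, any $a\neq0$ gives $w'_{uv}=a\neq0$, so the edge is created. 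In both cases every other edge out of $u$ is unaffected, and edges out of other vertices are unaffected because all other columns of $V$ are unchanged.

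\textbf{Main obstacle.} The only step needing care is the well-definedness of the hypotheses. Proposition~\ref{prop:birth-cycle-change} assumes $k<i$, but Proposition~\ref{prop:edge-interval-containment} already forces $i(v)<i(u)$ for any edge. So in the creation case I should note explicitly that the corollary only concerns pairs with $k<i$, as the statement presupposes. The rest of the argument is direct substitution.
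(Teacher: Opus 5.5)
Your argument for the stated corollary is correct and matches the paper, which gives no separate proof and reads the corollary off directly from Proposition~\ref{prop:birth-cycle-change}. As you note, the interval conditions $d_u<\infty$ and $b_v<d_u<d_v$ in the edge definition follow automatically from $u\in\mathcal D_t^{(q)}$ and $v\in\mathcal S_t^{(q)}$. So the edge $u\to v$ is present exactly when $w'_{uv}=w_{uv}+a\neq 0$.

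One assertion in your write-up is false and should be removed; the corollary does not need it. You claim that edges out of vertices other than $u$ are unaffected because the other columns of $V$ are unchanged. This fails in general. The modified column $V'_i$ also supplies the survivor basis element $[z_u]_{t'}$ at every death value $t'$ with $b_u<t'<d_u$. At such $t'$ the vertex $v$ also survives, because $b_v\leq b_u<t'<d_u<d_v$. The survivor basis at $t'$ therefore changes by $[z_u]_{t'}\mapsto[z_u]_{t'}+a[z_v]_{t'}$. For $u'\in\mathcal D_{t'}^{(q)}$ the coordinates become $w'_{u'u}=w_{u'u}$ and $w'_{u'v}=w_{u'v}-a\,w_{u'u}$.

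The pair-of-pants example shows this actually happens. Take $u=[10,80)$, $v=[0,90)$ and $a=1$ over $\mathbb F_2$. The edge $[10,80)\to[0,90)$ is removed, as the corollary predicts. But $(i_{30})_*[z_{20}]_{30^-}=[z_{10}+z_0]_{30}$ now has coordinate $0$ on $[z_0]_{30}$, so the edge $[20,30)\to[0,90)$ disappears as well. The proposition justifies only the weaker claim that the other outgoing edges of $u$ are unchanged.
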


The preceding corollary describes a local change. An admissible basis modification can create or remove a particular edge. The dependence on the selected basis is stronger than this local observation suggests. We can choose a basis adapted to the persistence intervals themselves so that every finite selected birth cycle becomes zero at the death of its interval. Under this choice, all transition coefficients vanish.

\begin{proposition}
\label{prop:interval-adapted-reduction}
For every paired positive index $i$, let
\[
j=\operatorname{pair}(i)
\]
and let $\gamma_i\neq0$ be the coefficient of the cell $\sigma_i$ in the reduced column $R_j$. Define
\[
\widehat V_i:=\gamma_i^{-1}R_j.
\]
For unpaired positive indices and for negative indices, leave the corresponding columns unchanged.

Then $\widehat V$ is upper unitriangular and dimension preserving, and
\[
D\widehat V=R.
\]
For every finite nonzero persistence interval, the selected birth cycle produced by $\widehat V$ becomes zero in homology at its death value. Consequently, every transition coefficient of a dying label is zero and the corresponding persistence pairing graph has no directed edges.
\end{proposition}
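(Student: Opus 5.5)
The plan is to check the three properties of $\widehat V$ column by column, then read off the homological consequence from the relation $R_j=D V_j$.

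\emph{Structure of $\widehat V$.} Fix a paired positive index $i$ with $j=\operatorname{pair}(i)$. By Proposition~\ref{prop:killed-cycle}, $R_j$ is a $q$-chain supported on cells of index at most $i$, because $\operatorname{low}(R_j)=i$. Hence $\widehat V_i=\gamma_i^{-1}R_j$ is supported on rows $\le i$ and has entry $1$ in row $i$. By Lemma~\ref{lem:pivot-positive}, it is supported only on $q$-cells, the same dimension as $\sigma_i$. All other columns are unchanged, so $\widehat V$ is upper unitriangular and dimension preserving.

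\emph{The identity $D\widehat V=R$.} For the modified columns, $D\widehat V_i=\gamma_i^{-1}DR_j=\gamma_i^{-1}D^2V_j=0=R_i$, since $i$ is positive. The other columns are unchanged. Thus $(R,\widehat V)$ is a reduced factorization with the same reduced matrix and the same pairs as $(R,V)$. The selected birth cycles for paired indices are now $\widehat z_i=\gamma_i^{-1}R_j$, a nonzero multiple of the interval cycle $\rho_i$.

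\emph{Vanishing at the death value.} Let $u$ be a finite nonzero interval with $i=i(u)$, $j=\operatorname{pair}(i)$ and $t=d_u=f(\sigma_j)$. Since $R_j=DV_j$ and $V_j$ is supported on cells of index $\le j$, each of which has filtration value $\le t$, $R_j$ is a boundary in $K_t$. Hence $(i_t)_*[\widehat z_u]_{t^-}=\gamma_i^{-1}[R_j]_t=0$.

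We also need $\widehat z_u$ to lie in $K_{t^-}$. It is supported on cells of index $\le i$, and $f(\sigma_i)=b_u<t$, so it does. Because the transition coefficients with respect to $\widehat V$ are the unique coordinates of $(i_t)_*[\widehat z_u]_{t^-}$ in the survivor basis (Corollary~\ref{cor:survivor-image-basis} applied to $\widehat V$), they all vanish. So $\widehat w_{uv}=0$ for every $v\in\mathcal S_t^{(q)}$, and by Definition~\ref{def:persistence pairing-graph} the edge set is empty.

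The main point requiring care is that the earlier results, namely Theorem~\ref{thm:active-basis}, Corollary~\ref{cor:survivor-image-basis} and the definition of $w_{uv}$, use only that $R=DV$ is reduced with $V$ upper unitriangular and dimension preserving. They do not use that $V$ came from the left-to-right algorithm. I would note this explicitly, so that these results transfer verbatim to $\widehat V$.

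A second, minor subtlety is that the survivor basis itself changes under the modification, since survivors that are paired later also receive new cycles. This is harmless, because the vanishing argument shows the image itself is zero, independently of which basis is used to expand it.
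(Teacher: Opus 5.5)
Your proposal is correct and follows essentially the same route as the paper: a column-by-column check of unitriangularity and dimension via $\operatorname{low}(R_j)=i$, the computation $D\widehat V_i=\gamma_i^{-1}D^2V_j=0=R_i$, and the observation that $\widehat z_u=\gamma_i^{-1}R_j$ is a boundary once $\sigma_j$ has entered, so its image in $K_t$ is zero and all survivor coordinates vanish. Your extra remarks make explicit points the paper leaves implicit. These are that $\widehat z_u$ is supported in $K_{t^-}$ because $b_u<t$, and that Theorem~\ref{thm:active-basis} and Corollary~\ref{cor:survivor-image-basis} use only that $R=DV$ is reduced with $V$ upper unitriangular and dimension preserving, so they apply verbatim to $\widehat V$.
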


\begin{proof}
If $i$ is paired with $j$, then $\operatorname{low}(R_j)=i$. Hence, $R_j$ is supported on cell indices at most $i$ and has coefficient $\gamma_i\neq0$ at index $i$. Therefore, $\gamma_i^{-1}R_j$ has coefficient $1$ at $i$ and no support at indices larger than $i$. Thus replacing $V_i$ by this column preserves upper unitriangularity. The replacement also preserves dimension.

Moreover,
\[
D\widehat V_i = \gamma_i^{-1}DR_j = \gamma_i^{-1}D^2V_j = 0 = R_i.
\]
All unchanged columns still satisfy $D\widehat V_\ell=R_\ell$. Hence,
\[
D\widehat V=R.
\]

For a finite pair $(i,j)$, the selected birth cycle is now
\[
\widehat z_i=\gamma_i^{-1}R_j.
\]
By Proposition~\ref{prop:killed-cycle}, $R_j$ becomes a boundary when the cell $\sigma_j$ enters. Thus,
\[
(i_t)_*[\widehat z_i]_{t^-}=0
\]
at the death value $t$ of the corresponding interval. Its coordinates in the survivor basis are therefore all zero. Essential intervals have no finite death and hence no outgoing
edges. Thus the graph contains no directed edges.
\end{proof}

Proposition~\ref{prop:interval-adapted-reduction} gives the strongest form of the reduction dependence considered here. Even when the reduced matrix $R$, irs, and the barcode are unchanged, the selected basis can be chosen so that all directed edges disappear. Therefore, the edge set is not determined by the persistence module or by the reduced matrix alone. It records information about the particular representatives selected by the full reduction data. For this reason, we regard
\[
\mathfrak F=(K,f,\prec,\Bbbk,\mathcal R)
\]
as the complete input to a persistence pairing graph and assume that a fixed deterministic reduction convention is used whenever graphs are compared.

This dependence on the reduction is especially important in positive homological dimensions, where homology classes have no preferred cycle representatives in general. Dimension zero provides a useful contrast. Connected components give a natural geometric basis, and under the standard dimension-zero reduction convention the selected basis agrees with the usual elder rule labeling. We now show that, under these assumptions, a persistence pairing graph recovers the familiar branch relation of the merge tree.

\subsection{Relation to Merge Trees}
\label{subsec:merge-tree-relation}

We first recall the classical merge tree construction
\cite{morozov2013interleaving}. Let $g:X\longrightarrow\mathbb R$ be continuous. Assume that every sublevel set
\[
X_{\leq a} := g^{-1}((-\infty,a])
\]
is locally path-connected and has finitely many connected components, and that the component structure changes at only finitely many parameter values.

We define the epigraph of $g$ by
\[
\operatorname{epi}(g) := \left\{ (x,a)\in X\times\mathbb R \ \middle|\ g(x)\leq a \right\}.
\]
For $(x,a),(y,b)\in\operatorname{epi}(g)$, let $(x,a)\sim(y,b)$ if and only if $a=b$ and $x$ and $y$ belong to the same connected component of $X_{\leq a}$.

\begin{definition}
\label{def:merge-tree}
The \emph{merge tree} of $g$ is the quotient space
\[
M_g := \operatorname{epi}(g)/{\sim},
\]
equipped with the height function $h:M_g\longrightarrow\mathbb R$ given by
\[
h([(x,a)])=a.
\]
\end{definition}

A point of $M_g$ represents a connected component of a sublevel set at a particular parameter value. As the parameter increases, branches meet precisely when connected components merge. If the component structure eventually stabilizes to a single component, $M_g$ has one unbounded branch. If several components remain indefinitely, the same construction gives a \emph{merge forest}.

To compare the merge tree with $0$th persistent homology, we use the classical elder rule \cite{edelsbrunnerharer2010computational}. When two or more components merge, the branch with the earliest birth value is retained and the remaining branches terminate. If several branches have the same birth value, their relative order is determined by the same tie-breaking order $\prec$ used in the filtered boundary matrix.

For this comparison, let $\mathcal B_g$ be the directed graph whose vertices are the nonzero elder rule branches. When a nonzero branch terminates at a filtration value $t$, draw an edge from that branch to the unique nonzero branch labeling the connected component containing it immediately after the transition at $t$. Thus, simultaneous multiway merges produce one edge from each terminating nonzero branch to the branch retained after the entire merge event.

Now, assume that the component evolution of the cellular filtration $\{K_t\}$ agrees with that of the sublevel filtration $\{X_{\leq t}\}$ at the filtration values under consideration. Also, we assume that the $0$th reduction is \emph{zero-column preserving}, that is, for every positive $0$-cell $\sigma_i$,
\[
V_i=e_i.
\]
Equivalently, $z_i=\sigma_i$. The standard left-to-right reduction has this property because the boundary column of every $0$-cell is zero and therefore requires no column operations.

\begin{proposition}
\label{prop:h0-pairing-graph-merge-tree}
Under the preceding assumptions and a common tie-breaking order, the $0$th persistence pairing graph $\mathcal G^{(0)}(\mathfrak F)$ is isomorphic to $\mathcal B_g$. Every edge has coefficient $1\in\Bbbk$. Consequently, if the component structure eventually stabilizes to one component, $\mathcal G^{(0)}(\mathfrak F)$ is a rooted directed tree whose edges are oriented toward its unique root. If several terminal components remain, it is a rooted directed forest with one root for each terminal component.
\end{proposition}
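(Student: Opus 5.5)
The plan is to compute the transition coefficients explicitly in dimension zero, using the zero-column preserving assumption $z_i=\sigma_i$. Then $H_0(K_t;\Bbbk)$ has a basis indexed by connected components, and two vertices have the same class exactly when they lie in the same component. For a $0$-cell $x$ of $K_t$, write $[x]_t$ for its class.

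\textbf{Step 1: vertices correspond.} By Theorem~\ref{thm:persistence pairing}, the positive $0$-cells are the vertices that create new components. Each negative $1$-cell $\sigma_j$ merges two components. In the reduced column $R_j$, which is a difference of two vertices up to sign and reduction, the low index is the larger of the two oldest vertices of the merged components. So the pair $(i,j)$ terminates the younger branch, which is exactly the elder rule. Ties are resolved by $\prec$ on both sides.

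I would justify this by induction on the cellwise stages, with the invariant that at each stage $p$ every component of $K^p$ contains exactly one active positive vertex, namely its $\prec$-oldest vertex. Theorem~\ref{thm:active-basis} supplies the count: the active classes form a basis, so there is one per component. Since $[z_i]_p=[\sigma_i]_p$ is the component indicator, the active vertex in a component is unique. To see that it is the oldest, note that the killed class $[R_j]$ has low index equal to the younger root. Restricting to nonzero intervals gives the identification of vertex sets $\mathcal V^{(0)}$ with the nonzero elder-rule branches. The standing assumption that the components agree with $X_{\le t}$ at filtration values transfers this from $K_t$ to $M_g$.

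\textbf{Step 2: edges and coefficients.} Let $u\in\mathcal D^{(0)}_t$. Then $(i_t)_*[z_u]_{t^-}=[\sigma_{i(u)}]_t$ is the class of the component $C$ of $K_t$ containing $\sigma_{i(u)}$. By the invariant at the stage ending the transition at $t$, $C$ contains exactly one active positive vertex $\sigma_{k}$, and $[\sigma_{i(u)}]_t=[\sigma_k]_t$. The main obstacle is showing that $k=i(v)$ for a survivor $v\in\mathcal S^{(0)}_t$, rather than for an interval born at $t$ or a zero-length label. The vertex $\sigma_k$ is the oldest vertex in $C$, and $C$ contains $\sigma_{i(u)}$ with $f(\sigma_{i(u)})=b_u<t$. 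Therefore $f(\sigma_k)\le b_u<t$, so its interval has positive length and crosses $t$, hence $v\in\mathcal S_t^{(0)}$. Uniqueness of coordinates (Corollary~\ref{cor:survivor-image-basis}) then gives $w_{uv}=1$ and $w_{ur}=0$ for every $r\ne v$. Thus each finite vertex has exactly one out-edge, and it points to the branch retained by the elder rule for the component containing it after the whole event at $t$. This is the edge of $\mathcal B_g$, including for multiway merges. Essential intervals have no out-edges.

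\textbf{Step 3: tree/forest structure.} The graph has out-degree $1$ at finite intervals and $0$ at essential ones, and it is acyclic by Proposition~\ref{prop:pairing-graph-dag}. Following out-edges from any vertex must therefore terminate at an essential interval. These essential intervals correspond bijectively to the terminal components, again by Theorem~\ref{thm:active-basis} at stage $N$. Hence the graph is a rooted forest with edges oriented toward the roots, and it is a tree when there is one terminal component.
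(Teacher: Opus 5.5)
Your proposal is correct and follows essentially the same route as the paper: you use $z_i=\sigma_i$ to identify the selected active basis with the elder-rule component basis, then read off $(i_t)_*[z_u]_{t^-}=[z_v]_t$ and use uniqueness of coordinates in the survivor basis to get $w_{uv}=1$ and all other coefficients zero. Your inductive oldest-vertex invariant, the explicit check that the retained vertex lies in $\mathcal S_t^{(0)}$, and the out-degree-plus-acyclicity argument for the forest structure make explicit steps that the paper states more briefly, and this fuller version is sound.
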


\begin{proof}
Because every sublevel set is locally path-connected, its connected components agree with its path components. Hence, $H_0(X_{\leq a};\Bbbk)$ has a natural basis indexed by the connected components of $X_{\leq a}$.

For $a\leq b$, the inclusion $X_{\leq a}\hookrightarrow X_{\leq b}$ sends the homology class of a connected component of $X_{\leq a}$ to the class of the unique connected component of $X_{\leq b}$ that contains it.

Under the zero-column preserving assumption, every selected $0$th birth cycle is a single positive $0$-cell. In other words,
\[
z_i=\sigma_i.
\]
At every stage, the active selected classes therefore give one distinguished vertex class for each connected component. The standard $0$th persistence pairing retains the component with the earliest birth value when components merge. When birth values agree, the order $\prec$ determines which label is retained. Thus, the selected active basis agrees with the component basis labeled by the elder rule.

Let $u$ be a nonzero branch that terminates at a filtration value $t$, and let $v$ be the branch retained in the connected component containing it after the transition. Immediately before $t$, the selected vertices $z_u$ and $z_v$ lie in distinct connected components. Immediately after $t$, they lie in the same connected component. Therefore,
\[
(i_t)_*[z_u]_{t^-} = [z_v]_t.
\]
Since the survivor component classes form a basis of $\operatorname{im}(i_t)_*$, the transition coordinates of $u$ satisfy
\[
w_{uv}=1
\]
and $w_{ur}=0$ for every $r\in\mathcal S_t^{(0)}$ with $r\neq v$. Hence, $u\longrightarrow v$ is an edge of $\mathcal G^{(0)}(\mathfrak F)$ exactly when the
elder rule branch $u$ terminates into the component whose retained branch is $v$. This gives an isomorphism
\[
\mathcal G^{(0)}(\mathfrak F) \cong \mathcal B_g,
\]
and every edge weight is equal to $1$.

Every finite nonzero elder rule branch terminates exactly once, whereas each component that survives indefinitely contributes one unbounded branch. It follows that the directed graph is a rooted tree when there is one terminal component and a rooted forest when there are several.
\end{proof}

When birth values are repeated, the unlabeled merge tree does not by itself determine which equal-birth branch is retained at a merge. Accordingly, the labeled comparison above depends on the common tie-breaking convention.

\subsection{Computation}
\label{subsec:computation}

A persistence pairing graph can be computed from a standard persistence reduction \cite{edelsbrunner2002topological,zomorodian2005computing}, with transformation data retained to recover selected cycle representatives \cite{hang2021umatch}. A practical workflow is as follows.

\begin{enumerate}
\item Order the cells by filtration value, subject to the condition that every proper face precedes the cell containing it, and fix a deterministic rule for all remaining ties.
\item Reduce the boundary matrix to obtain $R=DV$. The low indices of the nonzero columns determine irs, while every positive column $V_i$ determines the selected birth cycle $z_i=V_i$.
\item For every filtration value $t$ at which at least one nonzero $q$-th interval dies, determine the sets $\mathcal D_t^{(q)}$ and $\mathcal S_t^{(q)}$.
\item For each $u\in\mathcal D_t^{(q)}$, express $(i_t)_*[z_u]_{t^-}$ in the survivor basis $\left([z_v]_t\right)_{v\in\mathcal S_t^{(q)}}$. If
\[
(i_t)_*[z_u]_{t^-} = \sum_{v\in\mathcal S_t^{(q)}}w_{uv}[z_v]_t,
\]
add the directed edge $u\longrightarrow v$ whenever $w_{uv}\neq0$.
\end{enumerate}

The final step can be performed by direct homology linear algebra. Alternatively, persistent cohomology provides an equivalent dual computational viewpoint \cite{desilva2011dualities}, and the transition coefficients can be recovered through the Kronecker pairing using Proposition~\ref{prop:kronecker-coordinate-formula}. We now describe this second approach in a form that does not require the homology and cohomology computations to return mutually dual bases.

Suppose $\dim_{\Bbbk}H_q(K_t;\Bbbk)=r$. Choose a homology basis $([h_1],\ldots,[h_r])$ whose first
\[
s = \left|\mathcal S_t^{(q)}\right|
\]
elements are the selected survivor classes. Thus, after indexing the survivors as $v_1,\ldots,v_s$,
\[
[h_b]=[z_{v_b}]_t,
\]
for $1\leq b\leq s$. A natural choice for the remaining basis elements is given by the selected active classes born at $t$, although any extension to a basis of $H_q(K_t;\Bbbk)$ may be used.

Let $(\beta_1,\ldots,\beta_r)$ be any basis of $H^q(K_t;\Bbbk)$. We define the pairing matrix $P\in\Bbbk^{r\times r}$ by
\[
P_{ab} := \left\langle\beta_a,[h_b]\right\rangle_t.
\]
Because the Kronecker pairing is nondegenerate over the field $\Bbbk$, the matrix $P$ is invertible.

For any class $[z]\in H_q(K_t;\Bbbk)$, let $y_a := \left\langle\beta_a,[z] \right\rangle_t$. If
\[
[z] = \sum_{b=1}^r c_b[h_b],
\]
then
\[
y_a = \sum_{b=1}^rP_{ab}c_b.
\]
In matrix form, $y=Pc$, and therefore $c=P^{-1}y$.

Now take $[z] = (i_t)_*[z_u]_{t^-}$ for some $u\in\mathcal D_t^{(q)}$. Since this class belongs to
\[
\operatorname{im}(i_t)_* = \operatorname{span}_{\Bbbk}\left\{[z_{v_1}]_t,\ldots,[z_{v_s}]_t\right\},
\]
its coordinates in the complementary basis directions vanish. In other words,
\[
c_{s+1} = \cdots = c_r = 0.
\]
The transition coefficients are therefore $w_{u v_b}=c_b$ for $1\leq b\leq s$.

Equivalently, we define
\[
\varphi_b := \sum_{a=1}^r(P^{-1})_{ba}\beta_a,
\]
for $1\leq b\leq r$. Then
\[
\left\langle\varphi_b,[h_c]\right\rangle_t = \delta_{bc},
\]
so $(\varphi_1,\ldots,\varphi_r)$ is the cohomology basis dual to the chosen homology basis.

For an implementation, each cohomology class $\beta_a$ may be represented by a cocycle $\widetilde\beta_a$. The entries of the pairing matrix and the vector $y$ can then be evaluated directly as $P_{ab} = \widetilde\beta_a(h_b)$ and $y_a = \widetilde\beta_a(z_u)$. The latter equality is valid because the inclusion carries the same cellular cycle $z_u$ from $K_{t^-}$ into $K_t$. Thus, one need not construct a separate chain representative of $(i_t)_*[z_u]_{t^-}$.

This formulation does not require homology and cohomology reductions to produce mutually dual representatives. It requires only cycle and cocycle representatives, evaluation of cocycles on cycles, and the solution of an invertible linear system.

\subsubsection{Cubical Pair-of-Pants Example}
\label{subsubsec:pair-of-pants-example}

A finite cubical complex is a finite collection of elementary cubes that is closed under taking faces. In dimension two, its cells consist of vertices, edges, and squares \cite{wagner2012cubical,bleile2022digital}.

Let $(f_{ij})$ be a finite scalar array, and associate a square $Q_{ij}$ with each array entry. Assign to each square the filtration value $\lambda(Q_{ij}) = f_{ij}$. For every lower-dimensional face $\tau$, suppose
\[
\lambda(\tau) := \min\left\{f_{ij} \ \middle|\ \tau\subseteq Q_{ij}\right\}.
\]
If $\tau$ is a face of $Q_{ij}$, then $\lambda(\tau) \leq \lambda(Q_{ij})$, so the face monotonicity condition is satisfied. Therefore
\[
K_a := \left\{\sigma \ \middle|\ \lambda(\sigma)\leq a \right\}
\]
is a cubical subcomplex for every $a$ and defines a sublevel filtration.

Figure~\ref{fig:pair-of-pants-filtration} shows a two-dimensional example. Annular components appear successively at filtration values $0$, $10$, and $20$. Thus, immediately before filtration value $30$, there are three connected components, each carrying one independent $1$st class. At value $30$, additional squares connect the three components into a pair-of-pants region. This region is connected and has $\dim_{\mathbb F_2}H_1=2$. The two remaining one-dimensional holes are subsequently filled at filtration values $80$ and $90$.

\begin{figure}[ht]
\centering
\begin{subfigure}{0.3\textwidth}
\centering
\includegraphics[width=\linewidth]{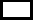}
\caption{$a=0$}
\end{subfigure}
\hfill
\begin{subfigure}{0.3\textwidth}
\centering
\includegraphics[width=\linewidth]{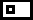}
\caption{$a=10$}
\end{subfigure}
\hfill
\begin{subfigure}{0.3\textwidth}
\centering
\includegraphics[width=\linewidth]{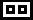}
\caption{$a=20$}
\end{subfigure}

\medskip

\begin{subfigure}{0.3\textwidth}
\centering
\includegraphics[width=\linewidth]{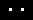}
\caption{$a=30$}
\end{subfigure}
\hfill
\begin{subfigure}{0.3\textwidth}
\centering
\includegraphics[width=\linewidth]{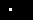}
\caption{$a=80$}
\end{subfigure}
\hfill
\begin{subfigure}{0.3\textwidth}
\centering
\includegraphics[width=\linewidth]{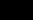}
\caption{$a=90$}
\end{subfigure}

\caption{Sublevel filtration of the cubical pair-of-pants example. Black cells belong to the cubical subcomplex at the indicated filtration value.}
\label{fig:pair-of-pants-filtration}
\end{figure}

We compute persistence with coefficients in $\mathbb F_2 = \{0, 1\}$. The nonzero $0$th intervals are
\[
[0,\infty), \qquad [10,30), \qquad\text{and}\qquad [20,30).
\]
At filtration value $30$, the components born at $10$ and $20$ join the component born at $0$. Under the elder rule, the oldest component is retained. Hence the $0$ th persistence pairing graph contains the edges 
\[
[10,30) \longrightarrow [0,\infty) \quad\text{and}\quad [20,30) \longrightarrow [0,\infty),
\]
each with coefficient $1\in\mathbb F_2$.

For the dimension-one reduction used in this example, the nonzero intervals are
\[
[0,90),\qquad [10,80),\qquad\text{and}\qquad [20,30).
\]
Immediately before filtration value $30$, there are three independent dimension-one classes. Immediately afterward, the pair-of-pants region has dimension-one homology of dimension two.

In the selected basis produced by the fixed reduction, the image of the selected birth class labeled by $[20,30)$ satisfies
\[
(i_{30})_*[z_{20}]_{30^-} = [z_{10}]_{30} + [z_0]_{30}.
\]
Thus, over $\mathbb F_2$, the two nonzero transition coefficients are both equal to $1$, producing the edges
\[
[20,30) \longrightarrow [10,80) \quad\text{and}\quad [20,30) \longrightarrow [0,90).
\]

At filtration value $80$, the corresponding selected-basis relation is $(i_{80})_*[z_{10}]_{80^-} = [z_0]_{80}$, and hence
\[
[10,80) \longrightarrow [0,90)
\]
is also an edge.

At filtration value $90$, $\mathcal S_{90}^{(1)} = \varnothing$, so the selected birth class labeled by $[0,90)$ maps to zero and the corresponding vertex has no outgoing edge.

Figure~\ref{fig:pair-of-pants-graphs} displays the resulting persistence pairing graphs by drawing each vertex as its associated persistence interval and superimposing the directed graph edges at the corresponding death values.

\begin{figure}[ht]
\centering
\includegraphics[width=0.8\textwidth]{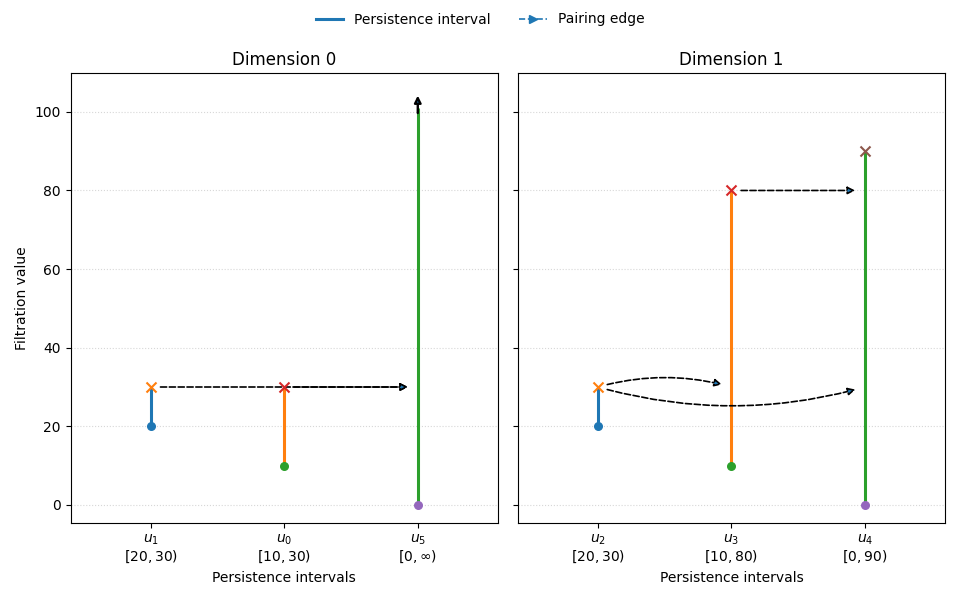}
\caption{Persistence intervals with the persistence pairing graph edges superimposed for the cubical pair-of-pants filtration.}
\label{fig:pair-of-pants-graphs}
\end{figure}

The dimension-one relation at filtration value $30$ is a relation among the selected basis classes. It does not mean that the persistence interval $[20,30)$ survives beyond $30$. By Theorem~\ref{thm:persistence pairing}, its interval representative becomes zero in homology at $30$. The equation
\[
(i_{30})_*[z_{20}]_{30^-} = [z_{10}]_{30} + [z_0]_{30}
\]
instead says that the particular selected birth class carrying the label $[20,30)$ remains nonzero after the transition and is re-expressed as the sum of the two surviving selected basis classes.

\subsection{Fused Graph Discrepancy}
\label{subsec:graph-discrepancies}

We now define a bounded discrepancy between two persistence pairing graphs. The construction is of fused Gromov--Wasserstein type: one term compares attributes attached to individual vertices, while a quadratic term compares pairwise graph structure \cite{vayer2020fused}. Since a persistence pairing graph depends on the selected representatives produced by the reduction, we call the resulting quantity a \emph{discrepancy} rather than a metric on persistence modules.

Fix a homological dimension $q$ and consider two persistence pairing
graphs
\[
\mathcal G_f^{(q)} = \bigl(V_f^{(q)},E_f^{(q)}\bigr) \quad\text{and}\quad \mathcal G_g^{(q)} = \bigl(V_g^{(q)},E_g^{(q)}\bigr).
\]
Each vertex $u$ is decorated by its persistence interval $I_u=[b_u,d_u)$.

Choose a scale $s_q>0$. The scale is fixed across all comparisons whose discrepancy values are to be compared numerically. For example, in a dataset one may choose $s_q$ once from the pooled collection of positive finite lifetimes, using a robust statistic such as a prescribed upper quantile.

We define the normalized vertex cost by
\begin{equation}
\label{eq:bounded-node-cost}
\widetilde c_{\mathrm{node}}(u,v) :=
\begin{cases}
\displaystyle
\min\left\{\frac{\max\{|b_u-b_v|,\lvert d_u-d_v\rvert\}}{s_q}, 1\right\}, & d_u,d_v<\infty, \\[4mm]
\displaystyle
\min\left\{\frac{|b_u-b_v|}{s_q}, 1\right\}, & d_u=d_v=\infty, \\[3mm]
1, & \text{exactly one of }d_u,d_v\text{ is infinite}. 
\end{cases}
\end{equation}
Observe that $0 \leq \widetilde c_{\mathrm{node}}(u,v) \leq 1$. The value $1$ assigned to a finite--essential mismatch avoids an infinite transport cost when a finite interval is matched with an essential interval.

For the unweighted directed edge structure, let
\[
A_f^{(q)}(u,u') :=
\begin{cases}
1, & (u,u')\in E_f^{(q)}, \\
0, & \text{otherwise},
\end{cases}
\]
and define $A_g^{(q)}$ analogously. The discrepancy considered here uses only the support of the edge relation. In particular, it does not require a numerical comparison of the coefficients $w_{uv}$ and is therefore meaningful over an arbitrary coefficient field.

Suppose that both vertex sets are nonempty. Let $\mu_f$ and $\mu_g$ be strictly positive probability measures on $V_f^{(q)}$ and $V_g^{(q)}$, respectively. Uniform measures are the simplest choice. Other weighting schemes may be used, provided that the same rule is fixed in advance.

A \emph{coupling} of $\mu_f$ and $\mu_g$ is a nonnegative matrix
\[
T = (T_{uv})_{u\in V_f^{(q)}, \,v\in V_g^{(q)}}
\]
whose marginals satisfy
\[
\sum_{v\in V_g^{(q)}}T_{uv} = \mu_f(u)
\]
for every $u\in V_f^{(q)}$, and
\[
\sum_{u\in V_f^{(q)}}T_{uv} = \mu_g(v)
\]
for every $v\in V_g^{(q)}$. The set of all such couplings is denoted by $\Pi(\mu_f,\mu_g)$.

For a parameter $0\leq\alpha\leq1$, we define
\begin{align}
\mathcal F_\alpha^{(q)}(T) &:= (1-\alpha)\sum_{u\in V_f^{(q)}}\sum_{v\in V_g^{(q)}}T_{uv} \widetilde c_{\mathrm{node}}(u,v)\notag\\
&\quad+ \alpha\sum_{u,u'\in V_f^{(q)}}\sum_{v,v'\in V_g^{(q)}}T_{uv}T_{u'v'}\left|A_f^{(q)}(u,u') - A_g^{(q)}(v,v')\right|.
\label{eq:fused-objective}
\end{align}

The first term measures disagreement between persistence interval attributes under the coupling. The second term measures disagreement between directed edge relations. More precisely, if $(U,V)$ and $(U',V')$ are drawn independently according to the coupling $T$, then the structural term is
\[
\mathbb E\left[\left|A_f^{(q)}(U,U') - A_g^{(q)}(V,V')\right|\right].
\]
Briefly, it is the probability, under the coupling, that the two graphs disagree about whether the corresponding directed edge is present.

\begin{definition}
\label{def:fpg-discrepancy}
Suppose both persistence pairing graphs are nonempty. For fixed $\alpha$, $s_q$, $\mu_f$, and $\mu_g$, the \emph{$q$-th fused persistence pairing graph discrepancy} is given by
\[
d_{\mathrm{FPG}}^{(q)}\left(\mathcal G_f^{(q)}, \mathcal G_g^{(q)}\right) := \min_{T\in\Pi(\mu_f,\mu_g)}\mathcal F_\alpha^{(q)}(T).
\]
\end{definition}

For empty graphs, we adopt the conventions $d_{\mathrm{FPG}}^{(q)}(\varnothing,\varnothing) := 0$ and
\[
d_{\mathrm{FPG}}^{(q)}(\varnothing,\mathcal G) = d_{\mathrm{FPG}}^{(q)}(\mathcal G,\varnothing) := 1
\]
whenever $\mathcal G$ is nonempty.

\begin{proposition}
\label{prop:fpg-basic-properties}
\label{lem:fpg-bounded}
For every admissible coupling $T$,
\[
0 \leq \mathcal F_\alpha^{(q)}(T) \leq 1.
\]
Consequently, for nonempty persistence pairing graphs,
\[
0 \leq d_{\mathrm{FPG}}^{(q)} \leq 1.
\]
The same bounds hold for the empty graph cases under the conventions specified above.
\end{proposition}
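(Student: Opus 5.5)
The argument is a direct bound on each of the two terms of $\mathcal F_\alpha^{(q)}(T)$, using only the marginal constraints of $T$ and the fact that all costs lie in $[0,1]$.

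\textbf{Nonnegativity.} Every summand in \eqref{eq:fused-objective} is a product of nonnegative factors:
\begin{itemize}
\item $T_{uv}\geq 0$;
\item $\widetilde c_{\mathrm{node}}(u,v)\geq 0$ by \eqref{eq:bounded-node-cost};
\item $|A_f^{(q)}(u,u')-A_g^{(q)}(v,v')|\geq 0$.
\end{itemize}
The weights $1-\alpha$ and $\alpha$ are also nonnegative because $0\leq\alpha\leq 1$. Hence $\mathcal F_\alpha^{(q)}(T)\geq 0$.

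\textbf{Total mass of $T$.} Summing the row-marginal condition over $u$ gives
\[
\sum_{u,v}T_{uv}=\sum_{u}\mu_f(u)=1,
\]
since $\mu_f$ is a probability measure.

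\textbf{Upper bound on the vertex term.} We already observed that $\widetilde c_{\mathrm{node}}\leq 1$. Therefore the vertex sum is at most $\sum_{u,v}T_{uv}=1$.

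\textbf{Upper bound on the structural term.} The adjacency entries take values in $\{0,1\}$, so each absolute difference $|A_f^{(q)}(u,u')-A_g^{(q)}(v,v')|$ lies in $\{0,1\}$. The quadratic sum is therefore at most
\[
\sum_{u,u',v,v'}T_{uv}T_{u'v'}=\Bigl(\sum_{u,v}T_{uv}\Bigr)^2=1.
\]

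\textbf{Combining the terms.} Putting the two bounds together gives
\[
0\leq\mathcal F_\alpha^{(q)}(T)\leq(1-\alpha)\cdot 1+\alpha\cdot 1=1.
\]

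\textbf{Bounds on $d_{\mathrm{FPG}}^{(q)}$.} For nonempty graphs I must check that the minimum in Definition~\ref{def:fpg-discrepancy} exists and is attained within these bounds.
\begin{itemize}
\item $\Pi(\mu_f,\mu_g)$ is nonempty, since it contains the product coupling $T_{uv}=\mu_f(u)\mu_g(v)$.
\item $\Pi(\mu_f,\mu_g)$ is compact: it is a closed, bounded polytope in a finite-dimensional space.
\item $\mathcal F_\alpha^{(q)}$ is a polynomial in $T$, hence continuous.
\end{itemize}
So the minimum is attained at some admissible coupling, and the pointwise bounds above give $0\leq d_{\mathrm{FPG}}^{(q)}\leq 1$.

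\textbf{Empty graphs.} By the stated conventions the values are $0$ and $1$, both of which lie in $[0,1]$.

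I do not expect a genuine obstacle. The only point needing care is to justify attainment of the minimum, as done above, rather than silently treating it as an infimum.
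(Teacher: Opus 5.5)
Your proof is correct and follows essentially the same route as the paper. Both arguments use termwise nonnegativity, then the total mass $\sum_{u,v}T_{uv}=1$ to bound the vertex term and the quadratic term by $1$ each, then compactness of $\Pi(\mu_f,\mu_g)$ and continuity of $\mathcal F_\alpha^{(q)}$ to show the minimum is attained; your explicit product coupling for nonemptiness and your check of the empty-graph conventions are small details the paper leaves implicit.
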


\begin{proof}
Both terms in~\eqref{eq:fused-objective} are nonnegative. Since $0 \leq \widetilde c_{\mathrm{node}}(u,v) \leq 1$ and every coupling has total mass
\[
\sum_{u,v}T_{uv}=1,
\]
the vertex term lies in $[0,1]$.

Similarly,
\[
\left|A_f^{(q)}(u,u') - A_g^{(q)}(v,v') \right| \leq1.
\]
Since
\[
\sum_{u,u',v,v'} T_{uv}T_{u'v'} = \left(\sum_{u,v}T_{uv}\right)^2 = 1,
\]
the structural term also lies in $[0,1]$. Hence,
\[
0 \leq \mathcal F_\alpha^{(q)}(T) \leq (1-\alpha)+\alpha = 1
\]
for every coupling $T$, and therefore $0 \leq d_{\mathrm{FPG}}^{(q)} \leq 1$.

The coupling set $\Pi(\mu_f,\mu_g)$ is nonempty, closed, and bounded in a finite-dimensional Euclidean space, and is therefore compact. The function $\mathcal F_\alpha^{(q)}$ is continuous in $T$. Consequently, it attains its minimum on $\Pi(\mu_f,\mu_g)$.
\end{proof}

When probability measures are used, the discrepancy compares normalized vertex mass rather than absolute vertex multiplicity. Consequently, distinct graphs can have zero discrepancy. For example, an isolated vertex and several identical isolated vertices carrying the same interval decoration can have zero discrepancy under uniform measures. Thus $d_{\mathrm{FPG}}^{(q)}$ is not asserted to be a metric on persistence pairing graphs. If absolute vertex multiplicity is to be penalized, one may supplement the discrepancy with a cardinality term or consider an unbalanced optimal transport formulation \cite{chizat2018scaling}.

The boundedness is purely a property of the normalization. The next result identifies the part of the discrepancy that inherits the usual stability of persistent homology.

\begin{lemma}
\label{lem:node-stability}
Let $f,g:\mathcal K\longrightarrow\mathbb R$ be filtration functions on the same finite CW complex, and suppose
\[
\|f-g\|_\infty \leq \varepsilon.
\]
Then the $q$-th persistence diagrams admit a bottleneck matching of cost at most $\varepsilon$.

If an off-diagonal interval $u$ is matched to an off-diagonal interval $M(u)$, then
\[
\widetilde c_{\mathrm{node}}(u,M(u)) \leq \min\left\{\frac{\varepsilon}{s_q}, 1\right\}.
\]
The same bound holds when an essential interval is matched to an essential interval.
\end{lemma}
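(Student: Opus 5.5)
The first assertion is the bottleneck stability theorem of Cohen-Steiner, Edelsbrunner and Harer \cite{cohensteiner2007stability}, applied to the $q$-th persistence modules of the sublevel filtrations of $f$ and $g$. I would invoke it in its standard form for tame functions on a finite complex. Because $K$ is a finite CW complex and $f,g$ are filtration functions, every module $\mathbb V^{(q)}$ is pointwise finite-dimensional and changes at only finitely many values, so the hypotheses hold.

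To make the reduction to that theorem transparent, I would also note the interleaving behind it. If $\|f-g\|_\infty\le\varepsilon$, then every cell with $f(\sigma)\le t$ satisfies $g(\sigma)\le t+\varepsilon$. Hence $K^f_t\subseteq K^g_{t+\varepsilon}$, and symmetrically $K^g_t\subseteq K^f_{t+\varepsilon}$. These inclusions commute with the structure maps, since all maps are induced by inclusions of subcomplexes. So the two persistence modules are $\varepsilon$-interleaved, and the algebraic stability theorem gives a bottleneck matching $M$ of cost at most $\varepsilon$. This handles the first sentence.

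For the vertex-cost bound, I would unpack what "cost at most $\varepsilon$" means for off-diagonal pairs. If $u=[b_u,d_u)$ is matched to $M(u)=[b_{M(u)},d_{M(u)})$ with both finite, the $\ell^\infty$ distance between the diagram points is at most $\varepsilon$, so $\max\{|b_u-b_{M(u)}|,|d_u-d_{M(u)}|\}\le\varepsilon$. Dividing by $s_q>0$ and truncating at $1$ gives the first case of~\eqref{eq:bounded-node-cost}, because $x\mapsto\min\{x/s_q,1\}$ is monotone.

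The remaining matched cases need more care. First, essential intervals must be matched to essential intervals. The number of essential intervals with birth at most $t$ equals the rank of $H_q(K_t)\to H_q(K)$, and $K^f_t\subseteq K^g_{t+\varepsilon}$ shows these ranks interleave. Equivalently, the standard extended convention places essential points at $d=\infty$, where the $\ell^\infty$ distance to any finite point is infinite. So a matching of finite cost never pairs a finite interval with an essential one, and the third case of~\eqref{eq:bounded-node-cost} never occurs for matched pairs. Second, for an essential pair, the matching cost is $|b_u-b_{M(u)}|\le\varepsilon$, which gives the second case.

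The main obstacle is this bookkeeping around essential classes: the cited stability theorem is often stated for finite diagrams only. I would handle it either by appealing to the version with the convention $|\infty-\infty|=0$, or by noting that the essential births in degree $q$ are exactly the births of the persistence module restricted to classes surviving to $K$. Both filtrations end at the same complex $K$, so these births form a one-parameter family of images in the common space $H_q(K)$. The interleaving then gives an $\varepsilon$-matching of essential births, for instance by sorting them and matching in order, using the rank inequalities.
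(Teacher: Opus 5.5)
Your proposal is correct and follows the same route as the paper: the sublevel inclusions $K^f_t\subseteq K^g_{t+\varepsilon}$ and $K^g_t\subseteq K^f_{t+\varepsilon}$ give an $\varepsilon$-interleaving, the stability theorem supplies a matching of cost at most $\varepsilon$, and unpacking the $\ell^\infty$ cost gives the first two cases of~\eqref{eq:bounded-node-cost}. You also check that a finite-cost matching can never pair a finite interval with an essential one, so the third case, which would violate the bound, never arises; this makes explicit a point the paper's proof leaves implicit.
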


\begin{proof}
For every filtration value $t$, the inequality $\|f-g\|_\infty\leq\varepsilon$ implies
\[
K_f(t)\subseteq K_g(t+\varepsilon) \qquad\text{and}\qquad K_g(t)\subseteq K_f(t+\varepsilon).
\]
Thus, the associated persistence modules are $\varepsilon$-interleaved. By the stability theorem for persistence diagrams \cite{cohensteiner2007stability},
\[
d_B\bigl(\operatorname{Dgm}_q(f),\operatorname{Dgm}_q(g)\bigr) \leq\varepsilon.
\]

Therefore, a bottleneck matching $M$ can be chosen so that, whenever two finite off-diagonal intervals are matched,
\[
\max\left\{|b_u-b_{M(u)}|, |d_u-d_{M(u)}|\right\} \leq \varepsilon.
\]
Substituting into~\eqref{eq:bounded-node-cost} gives
\[
\widetilde c_{\mathrm{node}}(u,M(u)) \leq \min\left\{ \frac{\varepsilon}{s_q}, 1 \right\}.
\]

For matched essential intervals, their birth values differ by at most $\varepsilon$, and the same conclusion follows from the second case of~\eqref{eq:bounded-node-cost}.
\end{proof}

A bottleneck matching may match short finite intervals to the diagonal. To isolate genuine vertex-to-vertex matches, and optionally to remove short intervals, fix a threshold $\tau\geq0$. For a bottleneck matching $M$, let
\[
R_M^{(q)}(\tau) := \left\{u\in V_f^{(q)} \ \middle|\ M(u)\in V_g^{(q)},\; \operatorname{pers}(u)>\tau,\; \operatorname{pers}(M(u))>\tau \right\},
\]
where $\operatorname{pers}([b,d)) := d-b$ for $d<\infty$, and essential intervals have infinite persistence.

Here, persistence intervals are regarded with multiplicity, so a bottleneck matching matches individual interval occurrences even when several intervals have identical endpoints.

Recall that, under the usual bottleneck-distance convention, a finite interval of persistence $\ell$ has cost $\ell/2$ to the diagonal \cite{cohensteiner2007stability,edelsbrunnerharer2010computational}. Consequently, intervals of persistence greater than $2\varepsilon$ cannot be sent to the diagonal by a matching of bottleneck cost at most $\varepsilon$.

Let $\mathcal G|_S$ denote the directed subgraph induced by a vertex set $S$. For a probability measure $\mu$ on $R_M^{(q)}(\tau)$, suppose
\begin{equation}
\label{eq:edge-disagreement}
\Delta_{E,\mu}^{(q)}(M) := \sum_{u,u'\in R_M^{(q)}(\tau)} \mu(u)\mu(u') \left| A_f^{(q)}(u,u') - A_g^{(q)}(M(u),M(u')) \right|.
\end{equation}

Equivalently, if $U$ and $U'$ are drawn independently according to $\mu$, then
\[
\Delta_{E,\mu}^{(q)}(M) = \mathbb E \left[ \left| A_f^{(q)}(U,U') - A_g^{(q)}(M(U),M(U')) \right| \right].
\]
Thus, $\Delta_{E,\mu}^{(q)}(M)$ is the probability that a randomly chosen ordered pair of matched vertices has different directed edge status in the two induced graphs.

\begin{theorem}
\label{thm:matched-subgraph-bound}
Assume $R_M^{(q)}(\tau)\neq\varnothing$. Equip $R_M^{(q)}(\tau)$ with a probability measure $\mu$ and its matched image with the pushforward measure $M_\#\mu$.
Using these measures and the same parameters $\alpha$ and $s_q$ to compute the fused discrepancy between the two induced subgraphs, we have
\[
d_{\mathrm{FPG}}^{(q)}\Bigl(\mathcal G_f^{(q)}|_{R_M^{(q)}(\tau)}, \mathcal G_g^{(q)}|_{M(R_M^{(q)}(\tau))}\Bigr) \leq (1-\alpha) \min\left\{\frac{\varepsilon}{s_q}, 1 \right\}
+ \alpha \Delta_{E,\mu}^{(q)}(M).
\]
\end{theorem}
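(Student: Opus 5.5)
The plan is to bound the minimum in Definition~\ref{def:fpg-discrepancy} by evaluating the objective~\eqref{eq:fused-objective} at one explicit coupling: the graph of the matching. Write $S:=R_M^{(q)}(\tau)$ and $S':=M(S)$. A bottleneck matching is a partial bijection between interval occurrences, so $M$ restricts to a bijection $S\to S'$. Hence the pushforward $M_\#\mu$ is a probability measure on $S'$ with $M_\#\mu(M(u))=\mu(u)$, and it is strictly positive whenever $\mu$ is. Define
\[
T_{uv}:=\mu(u)\,\mathbf 1[v=M(u)], \qquad u\in S,\ v\in S'.
\]
Summing over $v$ gives the marginal $\mu$. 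Because $M$ is injective on $S$, summing over $u$ gives $M_\#\mu$. So $T\in\Pi(\mu,M_\#\mu)$, and it suffices to show that $\mathcal F_\alpha^{(q)}(T)$ is at most the right-hand side.

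\emph{Vertex term.} It equals
\[
(1-\alpha)\sum_{u\in S}\mu(u)\,\widetilde c_{\mathrm{node}}(u,M(u)).
\]
Every $u\in S$ is matched to an off-diagonal interval by definition of $R_M^{(q)}(\tau)$. A bottleneck matching of finite cost never pairs an essential interval with a finite one, so $u$ and $M(u)$ are either both finite or both essential. In either case Lemma~\ref{lem:node-stability} gives
\[
\widetilde c_{\mathrm{node}}(u,M(u))\leq\min\{\varepsilon/s_q,1\}.
\]
Since $\sum_{u\in S}\mu(u)=1$, the vertex term is at most $(1-\alpha)\min\{\varepsilon/s_q,1\}$.

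\emph{Structural term.} Only pairs with $v=M(u)$ and $v'=M(u')$ contribute, with weight $\mu(u)\mu(u')$. The adjacency matrices in the fused objective for the induced subgraphs $\mathcal G_f^{(q)}|_S$ and $\mathcal G_g^{(q)}|_{S'}$ are the restrictions of $A_f^{(q)}$ and $A_g^{(q)}$ to $S\times S$ and $S'\times S'$. The structural term is therefore exactly $\Delta_{E,\mu}^{(q)}(M)$ from~\eqref{eq:edge-disagreement}, and multiplying by $\alpha$ and adding the vertex bound gives the claim.

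The main obstacle is not the computation but making the implicit hypotheses explicit. First, $M$ must be the bottleneck matching of cost at most $\varepsilon$ supplied by Lemma~\ref{lem:node-stability} under $\|f-g\|_\infty\leq\varepsilon$. Second, the matching must respect the finite/essential dichotomy. I would justify this from the standard convention that essential points lie at infinity and can only be matched among themselves at finite cost. Third, the multiset convention is what makes the coupling well defined: $M$ acts on interval occurrences, and graph vertices are labeled occurrences, so injectivity holds even when several intervals share endpoints.
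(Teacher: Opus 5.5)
Your proposal is correct and follows the paper's proof. Like the paper, you evaluate the fused objective at the deterministic coupling $T_M$ induced by the matching, bound the vertex term by Lemma~\ref{lem:node-stability}, and identify the structural term with $\Delta_{E,\mu}^{(q)}(M)$. Your extra remarks (the finite/essential dichotomy, adjacency of the induced subgraphs as restrictions, and the implicit hypothesis $\|f-g\|_\infty\leq\varepsilon$) make explicit what the paper leaves implicit. One small point: injectivity of $M$ holds, but it is not needed for the second marginal to equal $M_\#\mu$, which follows directly from the definition of the pushforward.
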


\begin{proof}
The deterministic coupling induced by $M$ is given by
\[
T_M(u,v) :=
\begin{cases}
\mu(u), & v=M(u),\\
0, & \text{otherwise}.
\end{cases}
\]
Its marginals are $\mu$ and $M_\#\mu$.

By Lemma~\ref{lem:node-stability},
\[
\begin{aligned}
\sum_{u,v} T_M(u,v) \widetilde c_{\mathrm{node}}(u,v) &= \sum_u \mu(u) \widetilde c_{\mathrm{node}}(u,M(u))\\
&\leq\min\left\{\frac{\varepsilon}{s_q}, 1\right\}\sum_u\mu(u)\\
&= \min\left\{\frac{\varepsilon}{s_q}, 1\right\}.
\end{aligned}
\]

Substituting the same coupling into the structural term gives
\[
\sum_{u,u'}\mu(u)\mu(u')\left|A_f^{(q)}(u,u') - A_g^{(q)}(M(u),M(u'))\right| = \Delta_{E,\mu}^{(q)}(M).
\]
Therefore,
\[
\mathcal F_\alpha^{(q)}(T_M) \leq (1-\alpha)\min\left\{\frac{\varepsilon}{s_q}, 1\right\} + \alpha\Delta_{E,\mu}^{(q)}(M).
\]
Since the fused discrepancy is the infimum over all admissible couplings, it cannot exceed its value at $T_M$.
\end{proof}

\begin{corollary}
\label{cor:edge-preserved-bound}
Under the assumptions of Theorem~\ref{thm:matched-subgraph-bound}, suppose additionally that the matching preserves directed adjacency on $R_M^{(q)}(\tau)$; that is,
\[
A_f^{(q)}(u,u') = A_g^{(q)}(M(u),M(u'))
\]
for all $u,u'\in R_M^{(q)}(\tau)$. Then $\Delta_{E,\mu}^{(q)}(M)=0$ and
\[
d_{\mathrm{FPG}}^{(q)}\Bigl(\mathcal G_f^{(q)}|_{R_M^{(q)}(\tau)}, \mathcal G_g^{(q)}|_{M(R_M^{(q)}(\tau))}\Bigr) \leq (1-\alpha)\min\left\{\frac{\varepsilon}{s_q}, 1 \right\}.
\]
\end{corollary}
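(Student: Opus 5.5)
The corollary follows by specializing Theorem~\ref{thm:matched-subgraph-bound}, so no new coupling has to be built. The only work is to show that the edge-disagreement term vanishes under the adjacency-preservation hypothesis, and then substitute.

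The first step is to evaluate $\Delta_{E,\mu}^{(q)}(M)$ from~\eqref{eq:edge-disagreement}. By hypothesis,
\[
A_f^{(q)}(u,u') = A_g^{(q)}(M(u),M(u'))
\]
for every ordered pair $u,u'\in R_M^{(q)}(\tau)$. Hence every absolute difference in the sum is $0$, and each summand $\mu(u)\mu(u')\cdot 0$ vanishes. Therefore $\Delta_{E,\mu}^{(q)}(M)=0$. In probabilistic terms, the event that a randomly drawn matched pair has different edge status is empty.

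The second step is to substitute this into the bound of Theorem~\ref{thm:matched-subgraph-bound}, which applies because its hypotheses are assumed. These are: $R_M^{(q)}(\tau)\neq\varnothing$, the measure $\mu$ with pushforward $M_\#\mu$, and the same parameters $\alpha$ and $s_q$. The structural summand $\alpha\Delta_{E,\mu}^{(q)}(M)$ drops out, leaving
\[
d_{\mathrm{FPG}}^{(q)}\Bigl(\mathcal G_f^{(q)}|_{R_M^{(q)}(\tau)}, \mathcal G_g^{(q)}|_{M(R_M^{(q)}(\tau))}\Bigr) \leq (1-\alpha)\min\left\{\frac{\varepsilon}{s_q}, 1 \right\}.
\]

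There is no genuine obstacle here. The one point to state carefully is that the adjacency indices in $\Delta_{E,\mu}^{(q)}(M)$ range over all ordered pairs, including $u=u'$. For those pairs both sides are $0$, since neither graph has self-loops by Proposition~\ref{prop:pairing-graph-dag}, and in any case the hypothesis covers them.

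A second point to state is that the adjacency functions of the induced subgraphs coincide with the restrictions of $A_f^{(q)}$ and $A_g^{(q)}$. This holds because induced subgraphs keep exactly the edges between the retained vertices, so the hypothesis on the full graphs is the relevant one.
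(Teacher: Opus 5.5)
Your proposal is correct and matches the paper, which gives no separate proof and treats the corollary as an immediate specialization of Theorem~\ref{thm:matched-subgraph-bound}: the adjacency-preservation hypothesis makes every summand of $\Delta_{E,\mu}^{(q)}(M)$ vanish, and substituting $\Delta_{E,\mu}^{(q)}(M)=0$ into the theorem's bound gives the claim. Your remarks about the diagonal pairs $u=u'$ and about induced-subgraph adjacency agreeing with the restricted $A_f^{(q)}$, $A_g^{(q)}$ are accurate, but they add nothing beyond what the theorem already supplies.
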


Theorem~\ref{thm:matched-subgraph-bound} is intentionally not stated as a stability theorem for the full persistence pairing graph. The first term is controlled by the ordinary stability of persistence intervals. The second term measures the actual disagreement of the reduction-dependent directed edge relations under the chosen bottleneck matching.

Moreover, the theorem concerns only the induced subgraphs on the retained matched vertices. Intervals matched to the diagonal, intervals removed by the persistence threshold, and edges incident to excluded vertices are not represented in the bound.

In higher homological dimensions, a small perturbation of the filtration can change the selected representatives and hence the graph edges even when the underlying persistence diagrams remain close. The term $\Delta_{E,\mu}^{(q)}(M)$ makes this possible structural change explicit rather than assuming that it is controlled by $\|f-g\|_\infty$.

\section{Conclusions}

We introduced persistence pairing graphs to record information that is intentionally absent from a persistence barcode. A barcode records the lifetime of interval summands. After a matrix reduction has been fixed, however, one also obtains concrete selected birth cycles. When a persistence label ends, the selected cycle carrying that label need not itself become zero in homology. It may instead map to a linear combination of selected classes whose labels survive longer. A persistence pairing graph records the support of these transition coordinates.

A central point of the paper is the separation between the selected birth cycle and an interval-cycle representative. For a positive index $i$, the transformation column $V_i$ gives the selected birth cycle $z_i$. The active classes $[z_i]$ form a basis of homology at every cellwise stage. If $i$ is paired with a negative index $j$, the reduced death column $R_j$ gives a different cycle. It represents the interval born at $i$ and becomes a boundary exactly when $j$ enters. This distinction explains why a graph edge does not mean that a persistence interval survives past its death. The interval dies as usual, however, the edge records how the reduction-selected basis is rewritten across the same transition.

At each filtration value $t$, the selected survivor classes form a basis of $\operatorname{im}(i_t)_*$. Hence every selected label that dies at $t$ has a unique coordinate vector in this image basis. The associated kernel vectors form a basis of $\ker(i_t)_*$, and the same transition coefficients can be recovered by the Kronecker pairing with cohomology classes dual to the survivor basis. This gives both a homological and a cohomological description of the same basis transition.

The resulting graph has a simple global structure: every edge points from an interval that dies earlier to one that survives longer, so the graph is directed and acyclic. In dimension zero, when the standard reduction keeps positive vertices unchanged, the construction agrees with the elder rule branch relation of the merge tree. In higher dimensions, the meaning is algebraic rather than a literal merger of geometric subsets.

The dependence on the chosen reduction is not a defect to hide but part of the definition. The persistence module, its barcode, and the image and kernel of each inclusion map are determined by the filtered complex. The selected cycles and their coordinate supports need not be. For this reason, a persistence pairing graph should be interpreted as additional information attached to a fixed cell order and reduction convention. It is not an invariant determined by the persistence module alone.

We also introduced a bounded fused graph discrepancy that compares both persistence interval attributes and directed edge structure. The comparison theorem makes the same distinction explicit: ordinary persistence stability controls the interval contribution, while disagreement of graph edges remains a separate term. This prevents a stability statement about persistence diagrams from being incorrectly transferred to reduction-dependent graph edges.

Several questions remain open. On the theoretical side, it is natural to seek summaries that retain some transition information while depending less strongly on the chosen basis, for example through subspaces, ranks, or equivalence classes under allowed basis changes. It is also important to identify conditions under which particular edge relations are preserved by a fixed deterministic reduction rule. On the computational side, large image filtrations require methods that avoid storing the full change-of-basis matrix while still recovering the graph exactly. Finally, applications should test whether the additional edge information explains structure that is not already captured by persistence intervals alone.

The main message is therefore simple. A persistence barcode describes \emph{when} homological features live. A persistence pairing graph, after a reduction has been fixed, describes one precise way in which the selected homology basis changes when those labels end. These are different questions, and keeping them separate makes the added information and its limitations clear.

\bibliographystyle{plainnat}
\bibliography{main}

\end{document}